\documentclass[10pt]{article}
\usepackage{amsfonts,color}
 \usepackage{amsmath,amssymb,mathtools}
 \usepackage{epsfig}
\usepackage{lscape}
\usepackage{amssymb}
\usepackage{graphicx}
\usepackage[utf8]{inputenc}	
\usepackage{amsthm,amssymb}
\usepackage{amsfonts}
\usepackage[english]{babel}
\usepackage{dsfont}
\usepackage{bbm}
\usepackage[T1]{fontenc}
\usepackage{colonequals}
\usepackage{geometry}
\setcounter{tocdepth}{2}
\usepackage{wrapfig}

\newcommand{\id}{{\boldsymbol{\mathbbm{1}}}}

\setlength{\textheight}{23cm}
\setlength{\textwidth}{17cm}
\setlength{\topmargin}{-0.5cm}
\setlength{\oddsidemargin}{-0cm}
\setlength{\evensidemargin}{-1cm}

\allowdisplaybreaks[1]

\makeindex

 \newtheorem{theorem}{Theorem}[section]

 \newtheorem{proposition}[theorem]{Proposition}

 \newcommand{\R}{\mathbb{R}}
 
 \newcommand{\lam}{\lambda}
 \newcommand{\norm}[2][]{\|#2\|_{#1}}

 \DeclareMathOperator{\diag}{diag}
 \DeclareMathOperator{\Sym}{Sym}

 \DeclareMathOperator{\dev}{dev}

\newcommand{\GL}{{\rm GL}}
\newcommand{\SO}{{\rm SO}}

\def\barr{\begin{array}}

\DeclareMathOperator{\tr}{tr}

\def\dd{\displaystyle}

\def\barr{\begin{array}}
\def\earr{\end{array}}
\def\bec#1{\begin{equation}\label{#1}}
\def\becn{\begin{equation*}}
\def\endec{\end{equation}}
\def\endecn{\end{equation*}}

\makeatletter
\let\@fnsymbol\@arabic
\makeatother

\setcounter{MaxMatrixCols}{10}

\begin{document}
\newgeometry{top=2em,bottom=6.65em}
\title{An ellipticity domain for the distortional Hencky-logarithmic strain energy}
\author{Ionel-Dumitrel Ghiba\thanks{Corresponding author: Ionel-Dumitrel Ghiba, \ \  Lehrstuhl f\"{u}r Nichtlineare Analysis und Modellierung, Fakult\"{a}t f\"{u}r Mathematik,
Universit\"{a}t Duisburg-Essen, Thea-Leymann Str. 9, 45127 Essen, Germany;  Alexandru Ioan Cuza University of Ia\c si, Department of Mathematics,  Blvd.
Carol I, no. 11, 700506 Ia\c si,
Romania; and  Octav Mayer Institute of Mathematics of the
Romanian Academy, Ia\c si Branch,  700505 Ia\c si, email: dumitrel.ghiba@uni-due.de, dumitrel.ghiba@uaic.ro} \quad
and \quad
Patrizio Neff\thanks{Patrizio Neff,  \ \ Head of Lehrstuhl f\"{u}r Nichtlineare Analysis und Modellierung, Fakult\"{a}t f\"{u}r
Mathematik, Universit\"{a}t Duisburg-Essen,  Thea-Leymann Str. 9, 45127 Essen, Germany, email: patrizio.neff@uni-due.de}
\quad and \quad Robert J.\ Martin\thanks{Robert J.\ Martin, \ \ Lehrstuhl f\"{u}r Nichtlineare Analysis und Modellierung, Fakult\"{a}t f\"{u}r Mathematik,
Universit\"{a}t Duisburg-Essen, Thea-Leymann Str. 9, 45127 Essen, Germany; email:   robert.martin@uni-due.de}
}

\maketitle

\begin{abstract}

We describe ellipticity domains for  the isochoric elastic energy
\[
	F\mapsto \|\dev_n\log U\|^2=\bigg\|\log \frac{\sqrt{F^TF}}{(\det F)^{1/n}}\bigg\|^2
=\frac{1}{4}\,\bigg\|\log \frac{C}{(\det C)^{1/n}}\bigg\|^2
\]
for $n=2,3$, where $C=F^TF$ for $F\in {\rm GL}^+(n)$. Here, $\dev_n\log {U} =\log {U}-\frac{1}{n}\,
\tr(\log {U})\cdot\id$
 is the deviatoric part of   the logarithmic strain tensor $\log U$. For $n=2$ we identify the maximal  ellipticity domain, while for $n=3$ we show that the energy is Legendre-Hadamard elliptic in the set
 \[
 	\mathcal{E}_3\bigg(W_{_{\rm H}}^{\rm iso}, {\rm LH}, U, \frac{2}{3}\bigg)\colonequals\bigg\{U\in{\rm PSym}(3) \;\Big|\; \|\dev_3\log U\|^2\leq \frac{2}{3}\bigg\},
 \]
 which is similar to  the von-Mises-Huber-Hencky maximum distortion strain energy criterion.
 Our results complement the characterization of ellipticity domains for the quadratic Hencky energy $
	W_{_{\rm H}}(F)=\mu \,\|\dev_3\log U\|^2+ \frac{\kappa}{2}\,[\tr (\log U)]^2
$, $U=\sqrt{F^TF}$  with  $\mu>0$ and $\kappa>\frac{2}{3}\, \mu$, previously obtained by Bruhns et al.
\\
\vspace*{0.15cm}
\\
\textbf{Mathematics Subject Classification}: 74B20, 74G65, 26B25
\\
\vspace*{0.15cm}
\\
\textbf{Key words}: rank-one convexity, nonlinear elasticity, Hencky energy, logarithmic strain, ellipticity domain, isochoric energy, volumetric-isochoric split, constitutive inequalities, Legendre-Hadamard condition.
\end{abstract}
\tableofcontents

\restoregeometry
\newgeometry{top=6em,bottom=6em,left=7em,right=7em}
\newpage

\section{Introduction}
The quadratic Hencky energy
\[
	W_{_{\rm H}}(F) = \mu \,\|\log U\|^2+ \frac{\lambda}{2}\,[\tr (\log U)]^2 
	= \mu \,\|\dev_n \log U\|^2 + \frac{\kappa}{2}\,[\log(\det U)]^2\,,
\]
where $\kappa\geq0$ denotes the bulk modulus and $\mu,\lambda$ are the Lam\'{e} constants with $\mu>0$ and $3\,\lambda+2\,\mu\geq0$, has recently been shown to have a fundamental geometric property which uniquely characterizes it among all hyperelastic formulations: it measures the (squared) geodesic distance of the deformation gradient $F=\nabla\varphi\in\GL^+(n)\colonequals\{X\in \mathbb{R}^{n\times n}\,|\, \det X>0\}$ to the special orthogonal group $\SO(n)$ \cite{Neff_Osterbrink_Martin_hencky13,NeffEidelOsterbrinkMartin_Riemannianapproach}.
Here, $U=\sqrt{F^T F}$ is the right stretch tensor and $\dev_n\log {U} =\log {U}-\frac{1}{n}\,\tr(\log {U})\cdot\id$ is the deviatoric part of the Hencky strain tensor $\log U$, $\id$ denotes the identity tensor on $\R^{n\times n}$, $\|{X}\|^2=\langle {X},{X}\rangle$ is the Frobenius tensor norm and $\tr{(X)}=\langle {X},{\id}\rangle$ is the trace of $X\in\R^{n\times n}$.
The Hencky energy $W_{_{\rm H}}$ was originally introduced by Heinrich Hencky in 1928 \cite{Hencky29a,henckyTranslation}, see also  Richter's 1948 paper \cite[eq. 7.3]{richter1948isotrope}. However, in an 1931 article in the Journal of Rheology \cite{Hencky31}, Hencky also considered elastic energy potentials of the form
\begin{equation}
\label{eq:hencky1931}
	W_{1931}(F) = \mu\,\norm{\dev_3\,\log U}^2 + h(\det U)\,.
\end{equation}
Here, the volumetric part $h:\R_+\colonequals(0,\infty)\to\R$ of the energy is a function to be determined by experiments. In an 1933 article \cite{hencky1933elastic}, he suggested an even more general expression for describing the elastic behaviour of vulcanized rubber:
\[
	W_{1933}(F) =\mu\,\sum_{i=1}^n f\left(\log\frac{\lambda_i}{(\lambda_1\,\lambda_2\,\lambda_3)^{1\!/\!3}}\right) \;+\; h(\det U) = \mu\,\widetilde{f}(\dev_3\log U) + h(\det U)\,,
\]
where $\widetilde{f}:\Sym(3)\to\R$ is an isotropic function in Valanis-Landel form \cite{valanis1967strain,NeffMartin14}. The ellipticity of $W_{1931}$, provided that $h$ is convex on $\R_+$, depends only on the ellipticity properties of the isochoric term $\norm{\dev_3\,\log U}^2$ measuring the purely distortional part of the deformation, which we investigate in this article.

The necessity of finding an ellipticity domain for the isotropic invariant $\|\dev_n \log U\|^2$  of the logarithmic strain tensor $\log U$ (see \cite{neff2013hencky,neff2013hencky,Neff_Osterbrink_Martin_hencky13,NeffEidelOsterbrinkMartin_Riemannianapproach,Walton05}) arises from the observation that the isochoric part
\[
	W_{_{\rm H}}^{\text{\rm iso}}\left(F\right)\colonequals\mu \,\|{\rm dev}_n\log U\|^2 
\]
of the quadratic Hencky energy is not rank-one convex even in ${\rm SL}(n)\colonequals\{X\in {\rm GL}^+(n)\;|\det{X}=1\}$ for $n=2,3$ (see \cite{NeffGhibaLankeit}).
The understanding of loss of ellipticity is of fundamental importance in nonlinear elasticity \cite{Ogden83,Silhavy97,Eremeyev4,cism_book_schroeder_neff09,ndanou2014criterion,ndanou2015piston,favrie2014thermodynamically,Hutchinson82}.

It is easy to show that a given hyperelastic formulation is not rank-one convex.
In general, it is also clear that there exists a neighborhood of the identity tensor $\id$ where the formulation is Legendre-Hadamard elliptic (LH-elliptic). What is difficult, however, is to precisely describe the maximal domain of ellipticity: although we are able to numerically determine the maximal ellipticity domain for $\|\dev_n\log U\|^2$, in this paper we choose an intermediary way in that we analytically describe a large set in which the energy is LH--elliptic. The numerically obtained visualization in Fig.\ \ref{f12} indicates that this subset, expressed in terms of certain transformations of the principal stretches, is in fact the ellipse inscribed in the maximal ellipticity domain. For practical applications (like the coupling with elastoplasticity), knowing such a domain is mostly sufficient.

The analysis in this paper  is also motivated by the results established for $n=3$ by Bruhns et al. \cite{Bruhns01,Bruhns02JE} (see also
\cite{knowles1975ellipticity,glugegraphical} in order to compare the domains of ellipticity  obtained in nonlinear elastostatics
for a special material), who found an ellipticity domain for the quadratic Hencky strain energy $W_{_{\rm H}}$: they showed that $W_{_{\rm H}}$ satisfies the
Legendre-Hadamard condition for all principal stretches $\lambda_i$ with
$
\lambda_i\in[0.21162...,\sqrt[3]{e}]=[0.21162...,1.39561...]
$,
provided that the additional condition $\lambda>0$ holds. This result, however, is not applicable to the deviatoric quadratic Hencky energy $\|\dev_n \log U\|^2$, which corresponds to the case $\lambda = -\frac{2\,\mu}{3} < 0$.

It might also be worthwhile to find a scalar function of the isotropic invariant $\|\dev_n \log U\|^2$ such that the composition  is elliptic over  ${\rm GL}^+(n)$. Indeed, in the two-dimensional case we have identified such functions  \cite{NeffGhibaLankeit,ghiba2015exponentiated,GhibaNeffMartin2015,NeffGhibaPoly}, namely the so-called \emph{exponentiated Hencky energies}
\begin{align}\label{thdefHen}\hspace{-2mm}
 W_{_{\rm eH}}^{\text{\rm iso}}\left(F\right)\colonequals\dd\frac{\mu}{k}\,e^{k\,\|{\rm dev}_2\log U\|^2}, \qquad F\in {\rm GL}^+(2)\,,
\end{align}
where  $k$ is an additional dimensionless
parameter.   In \cite{ghiba2015exponentiated} it is shown that these energies are  polyconvex for $k\geq \frac{1}{4}$. In fact, $W_{_{\rm eH}}^{\text{\rm iso}}\left(F\right)$ is polyconvex if and only if $k\geq \frac{1}{4}$, see \cite{GhibaNeffMartin2015}, while $\norm{\dev_2 \log U}^2$ is not overall rank-one convex. For $n=3$, however, such a function is not yet known.

Knowles and Sternberg \cite{knowles1975ellipticity} have established a criterion for rank-one convexity (ellipticity) which is necessary and sufficient for $n=2$ but only necessary for $n=3$ (see also \cite{aubert1995necessary,aubert1985conditions,ball1984differentiability,davies1991simple,knowles1978failure,abeyaratne1980discontinuous} for alternative proofs). For $n=3$, necessary and sufficient conditions for ellipticity were given by Simpson and Spector \cite{Simpson87}, while for arbitrary dimension they were established for the first time by \v{S}ilhav\'{y} \cite{SilhavyPRE99} in terms of the copositivity of certain matrices.
The necessary and sufficient conditions introduced by Dacorogna \cite{Dacorogna01} were obtained by combining a result established by \v{S}ilhav\'y \cite{SilhavyPRE99} with one result on copositive matrices in dimension 3 by Hadeler \cite{hadeler1983copositive}. In this paper, we use Dacorogna's sufficient criterion \cite{Dacorogna01} for arbitrary $n$, which can be applied more easily than Dacorogna's necessary and sufficient conditions \cite[Theorem 5]{Dacorogna01}, and for $n=2$ is equivalent to the necessary and sufficient criterion previously shown by Knowles and Sternberg \cite{knowles1975ellipticity}.

We will, however, not go into further detail on the general importance of rank-one convexity.
For more information on this topic, we refer to the comprehensive books \cite{Dacorogna08,Silhavy97,DacorognaMarcellini} and to the papers \cite{Dacorogna01,SilhavyPRE99,Silhavy01,Silhavy03,Silhavy05}.

\section{Preliminaries}\setcounter{equation}{0}

An energy $W:{\rm GL}^+(n)\rightarrow\mathbb{R}$ is called rank-one convex \cite[page 352]{Ball77} on ${\rm GL}^+(n)$ if it is convex on all closed line segments in ${\rm GL}^+(n)$ with end points differing by a matrix of rank one, i.e.\ if
\begin{equation}
\label{eq:ROdefinition}
	W(F+(1-\theta)\, \xi\otimes \eta)\leq \theta \,W( F)+(1-\theta) W(F+\xi\otimes \eta)
\end{equation}
for all $F\in {\rm GL}^+(n)$, $\theta\in[0,1]$ and all $\,\, \xi,\, \eta\in\mathbb{R}^n$ with $F+t\, \xi\otimes \eta\in {\rm GL}^+(n)$ for all
$t\in[0,1]$, where $\xi\otimes\eta$ denotes the dyadic product. Using definition \eqref{eq:ROdefinition}, in  \cite{Neff_Diss00} it was shown for the first time that the mapping $F\mapsto\norm{\dev_n\log U}^2$ is not rank-one convex on all of $\GL^+(n)$.

Since ${\rm GL}^+(n)$ is an open subset of $\R^{n\times n}$, an energy $W:{\rm GL}^+(n)\rightarrow \mathbb{R}$ of class $C^2$ is rank-one convex if and only if it is Legendre-Hadamard elliptic (LH-elliptic) at all points $F\in {\rm GL}^+(n)$:
\begin{equation}
\label{eq:LHdefinition}
	D^2_F W(F).\,(\xi\otimes\eta,\xi\otimes\eta)\geq0 \quad\text{ for all }\;\xi,\eta\in\mathbb{R}^n\,.
\end{equation}
Note carefully that, by this definition, rank-one convexity is strictly a global concept: a function on $\GL^+(n)$ is either rank-one convex or it isn't. Legendre-Hadamard ellipticity, on the other hand, is also well-defined as a local property: a function $W:\GL^+(n)\to\R$ is called LH--elliptic (or simply \emph{elliptic}) on a set $\mathcal{E}\subset\GL^+(n)$ if \eqref{eq:LHdefinition} holds for all $F\in\mathcal{E}$. In this case, $\mathcal{E}$ is also called a \emph{domain of ellipticity} or \emph{ellipticity domain} for $W$. We also use the term \emph{maximal ellipticity domain} to refer to the set of \emph{all} points in which a function is LH--elliptic.

Let us remark that whereas Bruhns, Xiao and Mayers \cite{Bruhns01,Bruhns02JE} directly used definition \eqref{eq:LHdefinition} for finding an ellipticity domain of the quadratic Hencky energy, in this paper we do not calculate the second derivative $D^2_F W(F)$ of the energy $W(F)=\norm{\dev_n\log U}^2$.
Instead, we will consider the representation of the isotropic energy $W$ in terms of the principal stretches and utilize criteria applicable to this representation.

Next we recall some of these useful results about LH--ellipticity as well as some properties of the deviatoric part  of the strain tensor $\log U$.

\subsection{Criteria for LH--ellipticity based on principal stretches}\label{ROsect}

In the three-dimensional case, our purpose is to identify an ellipticity domain, but not necessarily the maximal one, for the energy $F\mapsto\|\dev_3 \log U\|^2$.
We therefore need a suitable \emph{sufficient} criterion for LH--ellipticity. The following theorem was given by Dacorogna \cite[Proposition 7]{Dacorogna01} in the form of a criterion for rank-one convexity, i.e.\ for ellipticity on all of $\GL^+(n)$. It can easily be seen from his proof that the local form given here holds as well; note that the requirement that the set $\mathcal{E}$ is open or the closure of an open set ensures that every $F\in\mathcal{E}$ can be written as the limit of a sequence $(F_k)_k\subset\mathcal{E}$ of matrices with pairwise different singular values, which is utilized in Dacorogna's proof. The criterion has previously been used by Gl\"uge and Kalisch \cite{glugegraphical} in a similar way.
\begin{theorem}\label{dacorogna5}
 Let  ${W}:{\rm GL}^+(n)\rightarrow\mathbb{R}$ be an objective and isotropic function of class $C^2$ with the representation in terms of the singular values of
 $U$ via $W(F)=g(\lambda_1,\lambda_2,...,\lambda_n)$, where $g\in C^2(\R_+^n,\R)$ is symmetric. Further, let $\mathcal{E}\subset\GL^+(n)$ be an open set or the closure of an open set. Define $\widetilde{\mathcal{E}}\subset\R_+^n$ as follows:
 \[
 	(\lambda_1,\dotsc,\lambda_n)\in\widetilde{\mathcal{E}} \ \text{ if and only if there exists }\ F\in\mathcal{E}\text{ such that }\lambda_1,\dotsc,\lambda_n\text{ are the singular values of }F\,.
 \]
 Then $W$ is Legendre-Hadamard elliptic at all $F\in\mathcal{E}$ if the following four sets of conditions hold:
 \begin{alignat}{3}
 &\text{i) }&&\!\!\underbrace{\frac{\partial^2 g}{\partial \lambda_i^2 }\geq 0}_{\text{\rm ``TE--inequalities''}} \mathrlap{\text{ for every }\; i=1,2,..,n \;\text{ and all }\;(\lambda_1,\lambda_2,...,\lambda_n)\in\widetilde{\mathcal{E}}\,,}\label{DBE1}\\[1em]
 &\text{ii) \ }&&\text{ for all }\;i\neq j\,,\nonumber\\[1em]
 &&&\dd\underbrace{\frac{\lambda_i\frac{\partial g}{\partial \lambda_i}-\lambda_j\frac{\partial g}{\partial \lambda_j}}{\lambda_i-\lambda_j}\geq
 0}_{\text{\rm ``BE--inequalities''}} &&\text{for all }\; (\lambda_1,\lambda_2,...,\lambda_n)\in\widetilde{\mathcal{E}} \;\text{ with }\;   \lambda_i\neq \lambda_j\,, \label{DBE2}\\
 &&&\dd\frac{1}{n-1}\,\sqrt{\frac{\partial^2 g}{\partial \lambda_i^2
 }\frac{\partial^2 g}{\partial \lambda_j^2 }}+\frac{\partial^2 g}{\partial \lambda_i \partial \lambda_j} +\frac{\frac{\partial g}{\partial
                             \lambda_i }- \frac{\partial g}{\partial \lambda_j}}{ \lambda_i -
                             \lambda_j}\geq 0  &&\text{for all }\; (\lambda_1,\lambda_2,...,\lambda_n)\in\widetilde{\mathcal{E}} \;\text{ with }\;   \lambda_i\neq \lambda_j\,, \label{DBE3}\\
 &&&\frac{1}{n-1}\,\sqrt{\frac{\partial^2 g}{\partial \lambda_i^2
 }\frac{\partial^2 g}{\partial \lambda_j^2 }}-\frac{\partial^2 g}{\partial \lambda_i \partial \lambda_j} +\frac{\frac{\partial g}{\partial
                             \lambda_i }+ \frac{\partial g}{\partial \lambda_j}}{ \lambda_i +
                             \lambda_j}\geq 0 &\qquad&\text{for all }\; (\lambda_1,\lambda_2,...,\lambda_n)\in\widetilde{\mathcal{E}}\,. \label{DBE4}
 \end{alignat}
For $n=2$, the conditions are also necessary.\hfill $\blacksquare$
\end{theorem}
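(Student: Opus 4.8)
The statement is essentially Dacorogna's \cite[Proposition 7]{Dacorogna01}, so the plan is to reproduce his argument while checking that it localizes to $\mathcal{E}$. \emph{First}, I would use objectivity and isotropy to reduce the Legendre--Hadamard test \eqref{eq:LHdefinition} to diagonal arguments. Since $W(Q_1FQ_2)=W(F)$ for all $Q_1,Q_2\in\SO(n)$, the bilinear form $D^2_FW(F)$ transforms covariantly under the singular value decomposition $F=Q_1\diag(\lambda_1,\dotsc,\lambda_n)\,Q_2^T$, the rank-one direction $\xi\otimes\eta$ being carried along by the rotations. It therefore suffices to verify $D^2_FW.(\xi\otimes\eta,\xi\otimes\eta)\geq0$ at $F=\diag(\lambda_1,\dotsc,\lambda_n)$ for all $\xi,\eta\in\R^n$. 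This is precisely where the hypothesis that $\mathcal{E}$ is open (or the closure of an open set) enters: it guarantees that each such diagonal $F$ is a limit of matrices with pairwise distinct singular values, so the difference quotients in \eqref{DBE2}--\eqref{DBE4} are well defined along the approximating sequence and the inequality at $F$ follows by continuity of $D^2_FW$.

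\emph{Second}, I would insert the explicit spectral formula for the Hessian of an isotropic function. Writing $g_i=\partial g/\partial\lambda_i$ and $g_{ij}=\partial^2 g/\partial\lambda_i\partial\lambda_j$, abbreviating the entries of $\xi\otimes\eta$ by $h_{ij}=\xi_i\eta_j$, and setting $a_i\colonequals h_{ii}=\xi_i\eta_i$, this formula splits into a diagonal part and pairwise off-diagonal blocks:
\[
	D^2_FW(F).(\xi\otimes\eta,\xi\otimes\eta)=\sum_{i,j=1}^n g_{ij}\,a_i a_j+\sum_{i<j}\Big[c_{ij}\,(h_{ij}^2+h_{ji}^2)+2\,d_{ij}\,h_{ij}h_{ji}\Big]\,,
\]
where $c_{ij}=\dfrac{\lambda_i g_i-\lambda_j g_j}{\lambda_i^2-\lambda_j^2}$ and $d_{ij}=\dfrac{\lambda_j g_i-\lambda_i g_j}{\lambda_i^2-\lambda_j^2}$. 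The point of this bookkeeping is that the combinations appearing in the theorem are exactly the natural ones: $c_{ij}$ has the sign of the BE-quotient in \eqref{DBE2} (they differ only by the positive factor $\lambda_i+\lambda_j$), while $c_{ij}+d_{ij}=\frac{g_i-g_j}{\lambda_i-\lambda_j}$ and $c_{ij}-d_{ij}=\frac{g_i+g_j}{\lambda_i+\lambda_j}$ produce the first-derivative terms of \eqref{DBE3} and \eqref{DBE4}, respectively.

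\emph{Third}, I would establish nonnegativity of this quadratic form. Because the variables are constrained by the rank-one relation $h_{ij}h_{ji}=a_ia_j$, one minimizes each off-diagonal block over $(h_{ij},h_{ji})$ with the product $h_{ij}h_{ji}$ held fixed; this is bounded below precisely when $c_{ij}\geq0$, i.e.\ the BE-inequalities \eqref{DBE2}, and the minimum forces $h_{ij}^2+h_{ji}^2=2|h_{ij}h_{ji}|=2|a_ia_j|$. One then treats the diagonal Hessian $\sum g_{ij}a_ia_j$ by distributing each term $g_{ii}a_i^2$ equally among the $n-1$ pairs that contain the index $i$ --- this equal splitting is the origin of the factor $\frac{1}{n-1}$ --- and bounding each allotted pair $\frac{1}{n-1}(g_{ii}a_i^2+g_{jj}a_j^2)\geq\frac{2}{n-1}\sqrt{g_{ii}g_{jj}}\,|a_ia_j|$ by the arithmetic--geometric mean inequality, which requires $g_{ii},g_{jj}\geq0$, i.e.\ the TE-inequalities \eqref{DBE1}. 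Splitting into the two cases $a_ia_j>0$ and $a_ia_j<0$ (equivalently the two ways the rank-one constraint can be saturated, $h_{ij}=\pm h_{ji}$) then collapses the nonnegativity of each combined pair-block to exactly conditions \eqref{DBE3} and \eqref{DBE4}. For $n=2$ there is a single pair and no distribution ambiguity, so running the same chain of equalities backwards shows the four conditions are also necessary; the equal splitting for $n\geq3$ is merely one admissible choice, which is why the criterion is then only sufficient.

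The main obstacle I anticipate is not any single calculation but the careful combinatorial bookkeeping of the third step: verifying that distributing the diagonal second derivatives $g_{ii}a_i^2$ across the $n-1$ off-diagonal blocks, and then matching the saturated rank-one configurations $h_{ij}=\pm h_{ji}$ against the two sign cases of $a_ia_j$, reassembles \emph{precisely} the combinations $\frac{1}{n-1}\sqrt{g_{ii}g_{jj}}\pm g_{ij}+\frac{g_i\mp g_j}{\lambda_i\mp\lambda_j}$ of \eqref{DBE3}--\eqref{DBE4}, with all signs and all denominators $\lambda_i-\lambda_j$ versus $\lambda_i+\lambda_j$ in their correct places. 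Once this alignment is pinned down, sufficiency for general $n$ and the converse for $n=2$ both follow from the elementary optimization sketched above.
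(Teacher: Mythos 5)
The paper does not actually prove this theorem: it is quoted from Dacorogna \cite[Proposition 7]{Dacorogna01}, and the authors' only original contribution is the remark immediately preceding the statement, namely that the hypothesis that $\mathcal{E}$ be open or the closure of an open set makes the localization legitimate, because every $F\in\mathcal{E}$ is then the limit of a sequence of matrices \emph{in $\mathcal{E}$} with pairwise distinct singular values. Your proposal instead reconstructs the proof behind the citation, and the reconstruction is essentially sound and faithful to Dacorogna's actual argument: the spectral formula for $D^2_FW$ at diagonal $F$ is the standard one (your coefficients $c_{ij}$, $d_{ij}$ are correct), the lower bound on each off-diagonal block under the rank-one constraint $h_{ij}h_{ji}=a_ia_j$ is exactly where the BE-inequalities \eqref{DBE2} enter (since $c_{ij}$ and the BE-quotient differ by the positive factor $\lambda_i+\lambda_j$), and your equal-splitting/AM--GM treatment of the diagonal Hessian is precisely the elementary proof of the copositivity criterion (a symmetric matrix with nonnegative diagonal and $a_{ij}\geq -\frac{1}{n-1}\sqrt{a_{ii}\,a_{jj}}$ is copositive) that Dacorogna, following {\v{S}}ilhav{\'y} and Hadeler, invokes; the identities $c_{ij}+d_{ij}=\frac{g_i-g_j}{\lambda_i-\lambda_j}$ and $c_{ij}-d_{ij}=\frac{g_i+g_j}{\lambda_i+\lambda_j}$ correctly match the two sign cases of $a_ia_j$ to \eqref{DBE3} and \eqref{DBE4}. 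What the paper's route buys is economy — it isolates the single genuinely new point needed here (the approximation argument inside $\mathcal{E}$), which you also identify; what your route buys is a self-contained proof that makes transparent which condition controls which part of the quadratic form. Two spots would need tightening in a full write-up: (i) in the approximation step, the essential point is not merely that the diagonal $F$ is a limit of matrices with distinct singular values, but that these approximants lie in $\mathcal{E}$ so that \eqref{DBE1}--\eqref{DBE4} can be applied to them — say this explicitly, since it is the entire content of the openness hypothesis; (ii) the $n=2$ necessity is only gestured at: ``running the chain backwards'' requires exhibiting rank-one directions that simultaneously saturate the AM--GM step and the off-diagonal minimization (e.g.\ $|\xi_1/\xi_2|=|\eta_1/\eta_2|=(g_{22}/g_{11})^{1/4}$ with the two sign choices, plus the degenerate case $g_{11}\,g_{22}=0$), which is the Knowles--Sternberg argument and not automatic. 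Neither is a flaw in the approach itself.
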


 Here $\mathbb{R}_+=(0,\infty)$. The necessary and sufficient conditions of this theorem in the case $n=2$ are the same as established by  Knowles and Sternberg  \cite{knowles1976failure,knowles1978failure}, see also \cite[page 318]{Silhavy97}.

Dacorogna \cite[page 6]{Dacorogna01} also explains that due to the permutation symmetry of $g$, it is enough to establish only $4$ inequalities: one TE--inequality (tension-extension inequality) for $i=1$, one BE--inequality (Baker-Ericksen inequality) for $i=1$ and $j=3$ and two other inequalities from \eqref{DBE3}, \eqref{DBE4} 
for $i=1$, $j=3$. Note carefully that this remark is valid only when one considers the question whether a function is rank-one convex, i.e.\ LH--elliptic on all of $\GL^+(n)$; if a specific domain $\mathcal{E}$ is considered, then the corresponding set $\widetilde{\mathcal{E}}$, which consists of all $(\lambda_1,\dotsc,\lambda_n)\in\R_+^n$ which are singular values of some $F\in\mathcal{E}$, has to be invariant under permutations in order to reduce the number of inequalities. 

If, on the other hand, one wants to completely characterize  the maximal ellipticity domain for an energy in spatial dimension $n=3$, then the necessary and sufficient conditions of Dacorogna \cite[Theorem 5]{Dacorogna01} are better suited. In this set of conditions, one has to show 10 inequalities. However, due to some other symmetries and invariance properties, and since the BE--inequalities are always satisfied by $\|\dev_n\log U\|^2$, there remain 5 inequalities in the necessary and sufficient conditions of Dacorogna which have to be checked in order to study the ellipticity of the energy $\|\dev_3\log U\|^2$. We do not use this criterion in the analytic part of this article.

\subsection{Auxiliary remarks}
The norm of the deviator in $\R^{n\times n}$ is given by
$
\norm{\dev_n\,\diag(\xi_1,\dotsc,\xi_n)}^2 = \smash{\frac{1}{n}\sum\limits_{i,j=1,i<j}^n} (\xi_i-\xi_j)^2\,.
$
Thus, for  $F\in{\rm GL}^+(n)$ with singular values $\lam_1,\lam_2,...,\lam_n$, it follows that
 \begin{align}\label{wfn}
\|\dev_n\log U\|^2=g(\lam_1,\lam_2,...,\lam_n),
\end{align}
where the function  $g:\mathbb{R}_+^n\rightarrow [0,\infty)$ is given by
 \begin{align}\label{wf3}
g(\lambda_1,\lambda_2,...,\lambda_n)=\frac{1}{n}\sum\limits_{i,j=1,i<j}^n (\log{\lambda_i}-\log{\lambda_j})^2=\frac{1}{n}\sum\limits_{i,j=1,i<j}^n \log^2\frac{\lambda_i}{\lambda_j}.
\end{align}

Note that the function $g$ is invariant under scaling:
\begin{align}
  g(a\,\lambda_1,a\,\lambda_2,...,a\,\lambda_n)=g(\lambda_1,\lambda_2,...,\lambda_n) \qquad \text{for all}\quad a>0.
\end{align}
Hence, for the function $g$ corresponding to our energy $F\mapsto  \|\dev_n\log U\|^2$, the inequalities in Dacorogna's criterion are also
invariant under scaling, see \cite{NeffGhibaLankeit} for further details.
Therefore, for an arbitrary scaling factor $a>0$, the function $g$ satisfies the required inequalities from Dacorogna's criterion (Theorem \ref{dacorogna5}) in a point
$(\widetilde{\lambda}_1,\widetilde{\lambda}_2,...,\widetilde{\lambda}_n)
 =(a\,\lambda_1,a\,\lambda_2,...,a\,\lambda_n)$
if and only if it satisfies them in the point
$(\lambda_1,\lambda_2,...,\lambda_n)$.

Since $\norm{\dev_n\,\log U}^2$ linearizes to $\norm{\dev_n\varepsilon}^2$, where $\varepsilon$ denotes the linearized strain tensor, it is obvious that the maximal ellipticity domain of $\|\dev_n\log U\|^2$	contains a neighborhood of $\id$. Moreover, the above considerations show that this domain is an (unbounded) cone containing $\id$. In the following we will exploit this insight.

\section{The two-dimensional case }\setcounter{equation}{0}
Using the ellipticity conditions by Knowles and Sternberg, i.e.\ Theorem \ref{dacorogna5} for $n=2$, we obtain:
\begin{proposition}\label{crank}
The maximal ellipticity domain of the energy  $F\mapsto \|\dev_2\log U\|^2$, $F\in {\rm GL}^+(2)$  is
\begin{align}\label{condrank}
\mathcal{E}_2\left(W_{_{\rm H}}^{\rm iso}, {\rm LH}, U, \frac{1}{2}\right)\colonequals\left\{U\in{\rm PSym}(2)\, |\, \|\dev_2\log U\|^2\leq \frac{1}{2}\right\}.
\end{align}
\end{proposition}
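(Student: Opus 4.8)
The plan is to apply Theorem~\ref{dacorogna5} in the case $n=2$, where the four families of conditions \eqref{DBE1}--\eqref{DBE4} are both necessary and sufficient for LH--ellipticity. By the scaling invariance of $g$ noted above, I may normalize along each ray and work with the reduced variables; more practically, for $n=2$ it is convenient to set $t\colonequals\log\lambda_1-\log\lambda_2=\log(\lambda_1/\lambda_2)$, so that from \eqref{wf3} the energy becomes $g=\frac12\,t^2$, a function of the single combination $t$. First I would compute the required partial derivatives of $g$ with respect to $\lambda_1,\lambda_2$. Using $\frac{\partial t}{\partial\lambda_1}=\frac1{\lambda_1}$ and $\frac{\partial t}{\partial\lambda_2}=-\frac1{\lambda_2}$, one gets $\frac{\partial g}{\partial\lambda_1}=\frac{t}{\lambda_1}$ and $\frac{\partial g}{\partial\lambda_2}=-\frac{t}{\lambda_2}$, and then the second derivatives $\frac{\partial^2 g}{\partial\lambda_i^2}$ and $\frac{\partial^2 g}{\partial\lambda_1\partial\lambda_2}$ by a further differentiation.

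Next I would substitute these expressions into the four inequalities. The TE--inequality \eqref{DBE1} reduces to $\frac{\partial^2 g}{\partial\lambda_i^2}=\frac{1-t}{\lambda_i^2}\geq0$ (with the sign of $t$ adjusted per index), i.e.\ a bound of the form $|t|\leq 1$; the BE--inequality \eqref{DBE2} should turn out to be automatically satisfied because $\frac{\lambda_1\partial_{\lambda_1}g-\lambda_2\partial_{\lambda_2}g}{\lambda_1-\lambda_2}$ is manifestly nonnegative (indeed equal to $\frac{2t}{\lambda_1-\lambda_2}$, and $t$ and $\lambda_1-\lambda_2$ always share the same sign). The two remaining conditions \eqref{DBE3} and \eqref{DBE4}, with $n=2$ so that the prefactor $\frac{1}{n-1}=1$, combine the mixed second derivative with the first-order difference quotients; after inserting the derivatives each collapses to a scalar inequality in $t$. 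The binding one should be \eqref{DBE3}, whose reduction I expect to yield precisely $t^2\leq1$, equivalently $g=\frac12 t^2\leq\frac12$.

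Collecting the reductions, all four conditions hold exactly when $\tfrac12\,t^2=\|\dev_2\log U\|^2\leq\tfrac12$, and fail as soon as this is violated; since in dimension two Dacorogna's conditions are necessary \emph{and} sufficient, this characterizes the maximal ellipticity domain as the stated set $\mathcal{E}_2(W_{_{\rm H}}^{\rm iso},\mathrm{LH},U,\tfrac12)$. To phrase the conclusion in terms of $U\in\PSym(2)$ rather than singular values, I would invoke the remark that $\widetilde{\mathcal{E}}$ is permutation invariant (the constraint $\|\dev_2\log U\|^2\leq\frac12$ is symmetric in $\lambda_1,\lambda_2$) together with the scaling-cone structure already established, so that the condition on $(\lambda_1,\lambda_2)$ transfers verbatim to a condition on $U$.

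The main obstacle is arithmetic rather than conceptual: the mixed-derivative inequalities \eqref{DBE3}--\eqref{DBE4} involve the square-root term $\sqrt{\partial^2_{\lambda_1}g\,\partial^2_{\lambda_2}g}$, and one must be careful that this radicand is nonnegative (which forces $|t|\leq1$ from the TE step first) and that the sign bookkeeping in the difference quotients $\frac{\partial_{\lambda_1}g\mp\partial_{\lambda_2}g}{\lambda_1\mp\lambda_2}$ is handled correctly as $\lambda_1\to\lambda_2$. I would treat the coincident case $\lambda_1=\lambda_2$ (where $t=0$ and the energy vanishes) by taking the limit, noting the difference quotients pass to the corresponding derivatives; there the inequalities hold trivially. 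The only genuine subtlety is verifying that \eqref{DBE3} is the sharp constraint and that it yields exactly the threshold $\frac12$, matching the Knowles--Sternberg result cited after the theorem.
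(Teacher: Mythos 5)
Your route is the same as the paper's: apply Theorem \ref{dacorogna5} for $n=2$ (where the conditions are necessary \emph{and} sufficient), compute the derivatives of $g$, note that the TE--inequalities \eqref{DBE1} give $|t|\le 1$ for $t=\log(\lambda_1/\lambda_2)$, and that the BE--inequality \eqref{DBE2} is automatic. All the computations you actually write down are correct, and the symmetry/scaling remarks at the end are fine.

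The gap is in the sufficiency half, which you defer and mispredict. You claim the ``binding'' constraint is \eqref{DBE3} and that its reduction ``yields precisely $t^2\le 1$.'' It does not: inserting your derivatives and multiplying by $\lambda_1\lambda_2>0$, condition \eqref{DBE3} reads
\[
\sqrt{1-t^2}\,-\,1\,+\,t\,\frac{\lambda_1+\lambda_2}{\lambda_1-\lambda_2}
\;=\;\sqrt{1-t^2}\,-\,1\,+\,t\coth(t/2)\;\ge\;0\,,
\]
and since $t\coth(t/2)\ge 2$ for all $t\neq 0$, the left-hand side is at least $\sqrt{1-t^2}+1>0$ whenever $|t|\le 1$. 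So \eqref{DBE3} is \emph{strictly} satisfied on the whole candidate domain and imposes no constraint there; its only role is that the radicand $(1-t)(1+t)/(\lambda_1^2\lambda_2^2)$ must be nonnegative, which is exactly the TE condition, not a new one. Similarly, \eqref{DBE4} becomes $\sqrt{1-t^2}+1-t\tanh(t/2)\ge0$, which holds since $0\le t\tanh(t/2)\le t^2/2\le \tfrac12$. The sharp constraints are therefore the TE--inequalities themselves --- this is precisely what the paper proves, in multiplicative variables, by showing that its inequalities \eqref{ineq6b}--\eqref{ineq6c} are redundant given \eqref{ineq6a}. Without carrying out this redundancy check, your argument only establishes one inclusion: ellipticity implies $\|\dev_2\log U\|^2\le\tfrac12$ (necessity of TE). The converse inclusion --- that every $U$ with $\|\dev_2\log U\|^2\le\tfrac12$ satisfies \emph{all four} conditions --- is the substance of the maximality claim, and it is exactly the step you left as an expectation, with the wrong expected outcome. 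The final answer is unaffected, but as written the proof is incomplete at its decisive point.
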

\begin{proof}
We will prove this result using the necessary and sufficient conditions given by Theorem \ref{dacorogna5} for $n=2$ together with the identity \eqref{wfn}. To this aim, we need to compute
\begin{alignat}{2}
\frac{\partial g}{\partial \lambda_1}&=\frac{1}{\lambda_1}\, \log \frac{\lambda_1}{\lambda_2}\,,& \qquad\qquad \frac{\partial g}{\partial \lambda_2}&=-\frac{1}{\lambda_2}\, \log \frac{\lambda_1}{\lambda_2}\,,\\
\frac{\partial^2 g}{\partial \lambda_1^2}&=\frac{1}{\lambda_1^2}\, \left[-  \log \frac{\lambda_1}{\lambda_2}+1\,\right]\,,&
\frac{\partial^2 g}{\partial \lambda_2^2}&=\frac{1}{\lambda_2^2}\, \left[ \log \frac{\lambda_1}{\lambda_2}+1\,\right]\,,\qquad\qquad
 \frac{\partial^2 g}{\partial \lambda_1\partial \lambda_2}=-\frac{1}{\lambda_1\, \lambda_2}\notag
\end{alignat}
and verify that inequalities \eqref{DBE1}--\eqref{DBE4} hold if and only if $(\lambda_1,\lambda_2)\in\widetilde{\mathcal{E}}_2$, where the set $\widetilde{\mathcal{E}}_2$ of singular values corresponding to the domain $\mathcal{E}=\mathcal{E}_2\left(W_{_{\rm H}}^{\rm iso}, {\rm LH}, U, \frac{1}{2}\right)$ is given by
\[
	\widetilde{\mathcal{E}}_2 = \left\{(\lambda_1,\lambda_2)\in\R_+^2 \;\Big|\; \log^2\frac{\lambda_1}{\lambda_2} \leq 1\right\}\,.
\]
The TE--inequalities  of Theorem \ref{dacorogna5} are equivalent to
\begin{align}\label{ineq1}
&- \log \frac{\lambda_1}{\lambda_2}+1 \geq 0\,,\qquad \log \frac{\lambda_1}{\lambda_2}+1\geq 0\,, 
\end{align}
while the BE--inequalities are satisfied everywhere for convex functions of $\log U$ \cite{NeffGhibaLankeit} and thus in particular by $\|\dev_2\log U\|^2$.
The inequalities \eqref{DBE2} and \eqref{DBE3} are equivalent to
\begin{align}
&\sqrt{ \left[-  \log \frac{\lambda_1}{\lambda_2}+1\right]\,\left[ \log \frac{\lambda_1}{\lambda_2}+1\right]}-1+\frac{\lambda_1+\lambda_2}{\lambda_1-\lambda_2}  \log \frac{\lambda_1}{\lambda_2}\geq 0 \quad\text{ if }\; \lambda_1\neq \lambda_2\label{ineq4a}\,, 
\\
&\sqrt{ \left[-  \log \frac{\lambda_1}{\lambda_2}+1\right]\,\left[ \log \frac{\lambda_1}{\lambda_2}+1\right]}+1-\frac{\lambda_1-\lambda_2}{\lambda_1+\lambda_2}  \log \frac{\lambda_1}{\lambda_2}\geq 0\,. \label{ineq4b}
\end{align}
 Since all these inequalities are symmetric in $\lambda_1$ and $\lambda_2$ (and thus the ellipticity domain is invariant w.r.t.\ the transformations $\lambda_1\mapsto \lambda_2$, $\lambda_2\mapsto \lambda_1$) we may assume that
$\lambda_1\geq \lambda_2$, i.e.\ that $t\colonequals\frac{\lambda_1}{\lambda_2}\geq 1$.
Geometrically speaking, considering this substitution means that it is necessary and sufficient to prove that the inequalities \eqref{ineq1}, \eqref{ineq4a} and \eqref{ineq4b} are satisfied along all lines $\lambda_1=t\, \lambda_2$, $t\geq 1$.

Thus, the inequalities \eqref{ineq1}, \eqref{ineq4a} and \eqref{ineq4b} are satisfied if and only if the following inequalities hold:
\begin{align}&-  \log t+1 \geq 0,\qquad \qquad \log t+1\geq 0\,,\label{ineq6a}\\ 
&\sqrt{ \left[-  \log t+1\right]\,\left[ \log t+1\right]}-(1-\log t)+\frac{2}{t-1}\, \log t\geq 0\,,\label{ineq6b}\\ 
 &\sqrt{ \left[-  \log t+1\right]\,\left[ \log t+1\right]}+(1+\log t)+\frac{2}{t+1} \, \log t\geq 0\,.\label{ineq6c} 
 \end{align}

 Since $t\geq 1$ and  $\sqrt{ \left[-  \log t+1\right]\,\left[ \log t+1\right]}\geq (1-\log t)$ for all $t>1$, we find that the inequalities \eqref{ineq6b} and \eqref{ineq6c} are redundant in  the set of inequalities describing the domain of ellipticity.  In conclusion, the independent inequalities describing the ellipticity domain are
\begin{align}\label{ineq9}
	-\log t+1 \geq 0,\qquad \qquad \log t+1\geq 0\,,
\end{align}
which can equivalently be expressed as
$
1 \leq \log^2 t = \log^2\frac{\lambda_1}{\lambda_2}
$.
Therefore we deduce that the ellipticity conditions are satisfied if and only if $(\lambda_1,\lambda_2)\in\widetilde{\mathcal{E}}_2$, i.e. if and only if $\|\dev_2\log U\|^2=\frac{1}{2}\log^2\frac{\lambda_1}{\lambda_2}\leq 1$, and the proof is complete.
\end{proof}
\section{The three-dimensional case }\setcounter{equation}{0}
For $n=3$, we consider the substitution
\begin{align}\label{lab}
\frac{\lambda _1}{\lambda _2}=e^a,\qquad \frac{\lambda _2}{\lambda _3}=e^b,\qquad \frac{\lambda _3}{\lambda _1}=e^{-(a+b)}.
\end{align}
Then
\begin{align}
\frac{\lambda_1}{e^a}=\frac{\lambda_2}{1}=\frac{\lambda_3}{e^{-b}}\equalscolon t\,,
\end{align}
which means that $(\lambda_1,\lambda_2,\lambda_3)$ belongs to the line which passes through $(0,0,0)$ and an arbitrary point $(e^a,1,e^{-b})$ in the plane $\lambda_2=1$. According to the invariance properties of the energy and of the conditions for ellipticity given in our preliminaries, it is enough to study the resulting inequalities only in the plane $\lambda_2=1$. 

Numerical calculations indicate that the three-dimensional maximal domain of ellipticity is that for which $(a,b)$ in \eqref{lab} belongs to the two-dimensional domain described in Fig.\ \ref{f11}. However, since it is difficult to characterize the maximal ellipticity domain,  we consider a significant  large subdomain of it (see Fig.\ \ref{f12}).

\begin{figure}[h!]
\hspace*{1cm}
\begin{minipage}[h]{0.4\linewidth}
\vspace*{3.36em}
\centering
\includegraphics[scale=0.6]{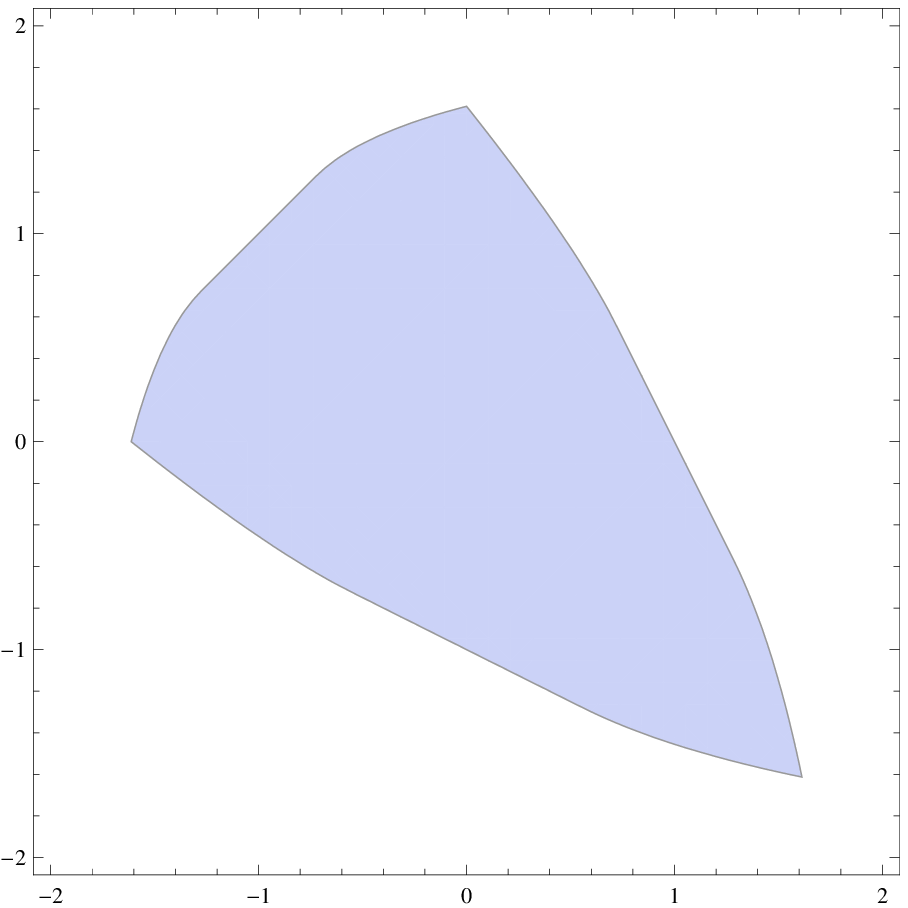}
\centering
\caption{The maximal ellipticity domain in terms of $(a,b)$ obtained numerically  after implementation of the necessary and sufficient criterion of Dacorogna \cite[Theorem 5]{Dacorogna01}, which we do not use in the analytic part of this article.}
\label{f11}
\end{minipage}
\qquad
\begin{minipage}[h]{0.4\linewidth}
\centering
\includegraphics[scale=0.6]{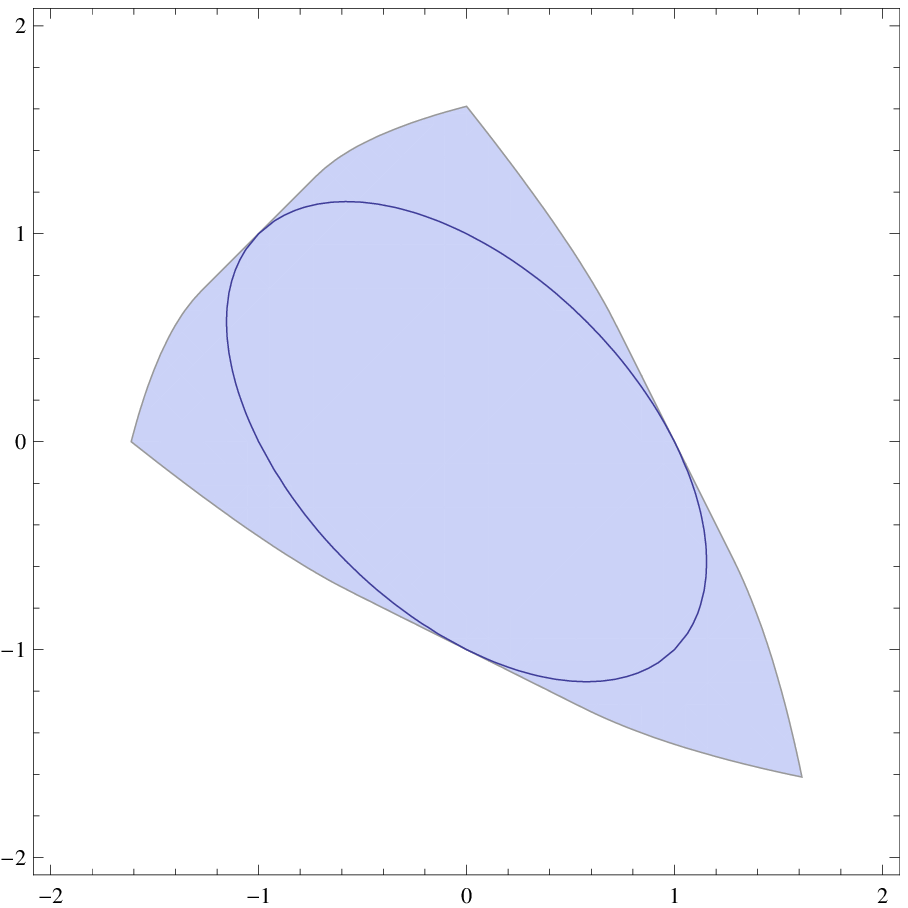}
\centering
\caption{We consider the full ellipse which is contained in the maximal ellipticity domain and which touches its boundary. }
\label{f12}
\end{minipage}
\end{figure}%

\begin{proposition}\label{crank3}
The maximal ellipticity domain of the energy  $F\mapsto \|\dev_3\log U\|^2$, $F\in {\rm GL}^+(3)$  contains the ellipticity domain
\begin{align}\label{condrank3}
\mathcal{E}_3\left(W_{_{\rm H}}^{\rm iso}, {\rm LH}, U, \frac{2}{3}\right)\colonequals\left\{U\in{\rm PSym}(3) \;\Big|\; \|\dev_3\log U\|^2\leq \frac{2}{3}\right\}\,.
\end{align}
\end{proposition}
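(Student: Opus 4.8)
The plan is to verify the four families of inequalities in Dacorogna's sufficient criterion (Theorem~\ref{dacorogna5} for $n=3$) on the set $\widetilde{\mathcal{E}}_3$ of singular values associated with $\mathcal{E}_3(W_{_{\rm H}}^{\rm iso},{\rm LH},U,\frac23)$. As in the two-dimensional case, I would first exploit the scaling invariance of $g$ to restrict attention to the plane $\lambda_2=1$ and use the substitution \eqref{lab}, i.e.\ $\lambda_1=e^a$, $\lambda_2=1$, $\lambda_3=e^{-b}$. A short computation then gives $\|\dev_3\log U\|^2=\tfrac23\,(a^2+ab+b^2)$, so that the constraint $\|\dev_3\log U\|^2\leq\tfrac23$ becomes exactly the \emph{ellipse} $a^2+ab+b^2\leq 1$, which is the region drawn in Fig.~\ref{f12}. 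All derivatives of $g$ become transparent if expressed through the deviatoric logarithmic stretches $d_i\colonequals\log\lambda_i-\tfrac13\sum_{k}\log\lambda_k$ (so that $d_1+d_2+d_3=0$ and $g=\sum_i d_i^2$): one finds $\frac{\partial g}{\partial\lambda_i}=\frac{2d_i}{\lambda_i}$, $\frac{\partial^2 g}{\partial\lambda_i^2}=\frac{1}{\lambda_i^2}\left(\tfrac43-2d_i\right)$ and $\frac{\partial^2 g}{\partial\lambda_i\partial\lambda_j}=-\frac{2}{3\lambda_i\lambda_j}$ for $i\neq j$.

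With these formulas the four conditions simplify considerably. The BE--inequalities \eqref{DBE2} hold automatically, since $g$ is a convex function of $\log U$. The TE--inequalities \eqref{DBE1} reduce to $d_i\leq\tfrac23$, which I would show is already implied by the constraint: maximizing $d_i$ subject to $\sum_k d_k=0$ and $\sum_k d_k^2\leq\tfrac23$ yields $\max d_i=\tfrac23$, attained at $(d_1,d_2,d_3)=(\tfrac23,-\tfrac13,-\tfrac13)$, a point lying on the boundary of the ellipse; this is precisely the tangency between the ellipse and the maximal domain. For the two remaining inequalities \eqref{DBE3} and \eqref{DBE4} I would multiply through by $\lambda_i\lambda_j>0$ and rewrite the difference-quotient terms using $\lambda_i/\lambda_j=e^{s}$ with $s\colonequals d_i-d_j$; a direct manipulation collapses them into $s\coth(s/2)$ and $s\tanh(s/2)$, respectively. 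Setting $p\colonequals d_i+d_j=-d_k$, both inequalities then depend only on $(p,s)$, the radicand factorizes as $\left(\tfrac43-2d_i\right)\left(\tfrac43-2d_j\right)=\left(\tfrac43-p\right)^2-s^2$, and the admissible region takes the single pair-independent form $3p^2+s^2\leq\tfrac43$, so that all three pairs are dispatched simultaneously.

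It then remains to establish the two scalar inequalities on $\{3p^2+s^2\leq\tfrac43\}$. Inequality \eqref{DBE3} is the mild one: discarding the nonnegative square root and using the elementary bound $s\coth(s/2)\geq 2$ leaves $\tfrac43-p$, which is strictly positive on the domain since $p\leq\tfrac23$. The inequality \eqref{DBE4} is where I expect the real work. After the estimate $s\tanh(s/2)\leq\tfrac{s^2}{2}$ it reduces to the algebraic inequality $\frac12\sqrt{\left(\tfrac43-p\right)^2-s^2}+\tfrac23+p-\tfrac{s^2}{2}\geq 0$; since the left-hand side is decreasing in $s^2$, it suffices to check it on the boundary $s^2=\tfrac43-3p^2$, where the fortunate identity $\left(\tfrac43-p\right)^2-s^2=\left(2p-\tfrac23\right)^2$ turns the radical into $\lvert 2p-\tfrac23\rvert$ and leaves an elementary one-variable estimate bounded below by $\tfrac13>0$.

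The main obstacle is thus inequality \eqref{DBE4}: one must control the transcendental $\tanh$-contribution and confirm that the square-root term alone compensates it. The perfect-square structure arising on the ellipse boundary is exactly what makes this step tractable, and the tangency observed for the TE--inequality clarifies why $\tfrac23$ is the natural threshold for this family. Once all four conditions are verified on $\widetilde{\mathcal{E}}_3$, Theorem~\ref{dacorogna5} yields the Legendre--Hadamard ellipticity of $F\mapsto\|\dev_3\log U\|^2$ on $\mathcal{E}_3(W_{_{\rm H}}^{\rm iso},{\rm LH},U,\tfrac23)$, which completes the proof.
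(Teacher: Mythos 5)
Your proof is correct, and although it starts from the same skeleton as the paper's --- Dacorogna's sufficient criterion in the principal-stretch representation, identification of the constraint set with the ellipse $a^2+ab+b^2\leq 1$, the BE--inequalities dispatched by convexity in $\log U$, and the TE--inequalities reduced to $d_i\leq \frac{2}{3}$ (which does hold with equality exactly on the boundary, as you note) --- your verification of the two difficult families \eqref{DBE3}, \eqref{DBE4} follows a genuinely different and substantially shorter route. The paper passes to polar-type coordinates $(p,\theta)$, is left with the transcendental inequalities $f_2\geq 0$, $f_3\geq 0$ involving $e^{\frac{2p}{\sqrt{6}}\sin\theta}$, and proves them in the appendix through case distinctions in $\theta$, further substitutions, monotonicity arguments, and one minimum that is in the end only evaluated numerically ($\approx 0.0573$). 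You instead pass to the pair variables $p=d_i+d_j=-d_k$ and $s=d_i-d_j$, in which the constraint takes the pair-independent form $3p^2+s^2\leq\frac{4}{3}$ and the difference quotients collapse, after multiplication by $\lambda_i\lambda_j$, to $-p+s\coth(s/2)$ and $p-s\tanh(s/2)$; the elementary hyperbolic bounds $s\coth(s/2)\geq 2$ and $s\tanh(s/2)\leq \frac{s^2}{2}$ then strip out all transcendental content at once. I checked the remaining algebra: \eqref{DBE3} reduces to $\frac{4}{3}-p\geq\frac{2}{3}>0$; for \eqref{DBE4} the left-hand side $\frac{1}{2}\sqrt{\left(\frac{4}{3}-p\right)^2-s^2}+\frac{2}{3}+p-\frac{s^2}{2}$ is decreasing in $s^2$, and on the boundary $s^2=\frac{4}{3}-3p^2$ the radicand equals $\left(2p-\frac{2}{3}\right)^2$, leaving $\left|p-\frac{1}{3}\right|+p+\frac{3}{2}p^2\geq\frac{1}{3}>0$ for $|p|\leq\frac{2}{3}$ --- all correct, including the derivative formulas $\frac{\partial g}{\partial\lambda_i}=\frac{2d_i}{\lambda_i}$, $\frac{\partial^2 g}{\partial\lambda_i^2}=\frac{1}{\lambda_i^2}\left(\frac{4}{3}-2d_i\right)$, $\frac{\partial^2 g}{\partial\lambda_i\partial\lambda_j}=-\frac{2}{3\lambda_i\lambda_j}$. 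What your route buys is a fully analytic, appendix-free and numerics-free proof in which all three index pairs are handled simultaneously; what the paper's $(p,\theta)$ parametrization buys is the direct connection to the numerically computed maximal domain (Fig.~\ref{f11}, Fig.~\ref{f12}) and the explicit cone description of the ellipticity domain used in the concluding remarks.
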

\begin{proof}
According to \ref{dacorogna5}, we have to show that inequalities \eqref{DBE1}--\eqref{DBE4} hold for all $(\lambda_1,\lambda_2,\lambda_3)\in\widetilde{\mathcal{E}}_3$, where the set $\widetilde{\mathcal{E}}_3$ of singular values corresponding to the domain $\mathcal{E}_3\left(W_{_{\rm H}}^{\rm iso}, {\rm LH}, U, \frac{2}{3}\right)$ is given by
\[
	\widetilde{\mathcal{E}}_3 = \left\{(\lambda_1,\lambda_2,\lambda_3)\in\R_+^3 \;\Big|\; \log^2\frac{\lambda_1}{\lambda_2} + \log^2\frac{\lambda_1}{\lambda_3} + \log^2\frac{\lambda_2}{\lambda_3} \;\leq\; 2\right\}\,.
\]
Let us first observe that, under the substitution \eqref{lab},
\[
	(\lambda_1,\lambda_2,\lambda_3)\in\widetilde{\mathcal{E}}_3 \qquad\Leftrightarrow \qquad a^2+b^2+a\, b\leq 1\,,
\]
which means that $(a,b)$ belongs to the full ellipse  $a^2+b^2+a\, b=1$. This domain is invariant under the three transformations
\begin{align}\label{substitution}
a&=x+y, \quad b=-y; \qquad\qquad\qquad\quad
a=y, \quad b=x; \qquad\qquad\qquad\quad
a=-y, \quad b=-x
\end{align}
in the sense that if $(x,y)$ belongs to the full ellipse  $w^2+z^2+w\, z=1$, then $(a,b)$ belongs also
to this domain and vice versa.
This invariance property is equivalent to the (obvious) invariance of the set $\widetilde{\mathcal{E}}_3$ under permutations of $\lambda_1$, $\lambda_2$ and $\lambda_3$. From this symmetry, it follows that we only need to consider the inequalities in Dacorogna's criterion for $i=1$ and $j=3$.
We compute
\begin{align}
\frac{\partial g}{\partial \lambda_1}&=\frac{2}{3\,{\lambda _1}}\,{\left(\log \frac{\lambda _1}{\lambda _2}\,-\log \frac{\lambda _3}{\lambda _1}\,\right)}, \qquad\ \ \
\frac{\partial g}{\partial \lambda_3}=-\frac{2}{3\,{\lambda _3}}\,\left(\log \frac{\lambda _2}{\lambda _3}\,-\log \frac{\lambda _3}{\lambda _1} \right),\\\
\frac{\partial^2 g}{\partial \lambda_1^2}&=\frac{2}{3\,{\lambda _1^2}}\,\left(\log \frac{\lambda _3}{\lambda _1}\,- \log \frac{\lambda _1}{\lambda _2}\,+2\right),\quad
\frac{\partial^2 g}{\partial \lambda_3^2}=\frac{2}{3\,{\lambda _3^2}}\,\left(  -\log \frac{\lambda _3}{\lambda _1}\,+\log \frac{\lambda _2}{\lambda _3}\,+2\right),\quad
\frac{\partial^2 g}{\partial \lambda_3\partial \lambda_1}=-\frac{2}{3\,\lambda _1 \lambda _3}\,.\notag
\end{align}
{The TE--inequality} for $i=1$ is equivalent to
\begin{align}
 2+a-b\geq 0,
\end{align}
while {the BE--inequality} for $i=1, j=3$ reads
\begin{align}
\frac{\lambda_3\frac{\partial g}{\partial \lambda_3}-\lambda_1\frac{\partial g}{\partial \lambda_1}}{\lambda_3-\lambda_1}=\frac{1}{3\,\lambda_1}\frac{-2 \left(a+2\,b\right)-2 \left(2\,a+b\right)}{e^{-(a+b)}-1}
=\frac{1}{\lambda_1}\frac{-2 \left(a+b\right)}{e^{-(a+b)}-1}\geq 0\notag
\end{align}
and is always satisfied.
Moreover, we compute
  \begin{align}
 \frac{1}{2}\,\sqrt{\frac{\partial^2 g}{\partial \lambda_3^2}\frac{\partial^2 g}{\partial \lambda_1^2}}&-\frac{\partial^2 g}{\partial \lambda_3\partial
 \lambda_1}+\frac{\frac{\partial g}{\partial \lambda_3}+\frac{\partial g}{\partial \lambda_1}}{\lambda_3+\lambda_1}\notag\\
 &=\frac{2}{3\,\lambda_3\,\lambda_1}\Big[\frac{1}{2}\,\sqrt{ \left(2+a+2\,b\right)\left( 2-2\,a-b\right)}+1+ \frac{-\lambda _1\, \left(a+2\,b\right) +\lambda_3\,\left(2\,a+b\right)}{\lambda_3+\lambda_1} \Big]\notag\\
 &=\frac{2}{3\,\lambda_3\,\lambda_1}\Big[\frac{1}{2}\,\sqrt{\left(2+a+2\,b\right) \left( 2-2\,a-b\right) }+1 +\frac{- e^{a+b}\left(a+2\,b\right) +\left(2\,a+b\right)}{1+e^{a+b}}\Big]\notag
 \end{align}
and
  \begin{align}
 \frac{1}{2}\,\sqrt{\frac{\partial^2 g}{\partial \lambda_3^2}\frac{\partial^2 g}{\partial \lambda_1^2}}&+\frac{\partial^2 g}{\partial \lambda_3\partial
 \lambda_1}+\frac{\frac{\partial g}{\partial \lambda_3}-\frac{\partial g}{\partial \lambda_1}}{\lambda_3-\lambda_1}\notag\\
 &=\frac{2}{3\,\lambda_3\,\lambda_1}\Big[\frac{1}{2}\,\sqrt{ \left(2+a+2\,b\right) \left( 2-2\,a-b\right)}-1+\frac{-\lambda _1\, \left(a+2\,b\right) -\lambda_3\,\left(2\,a+b\right)}{\lambda_3-\lambda_1} \Big]\notag\\
 &=\frac{2}{3\,\lambda_3\,\lambda_1}\Big[\frac{1}{2}\,\sqrt{\left(2+a+2\,b\right)\left( 2-2\,a-b\right)}-1+\frac{- e^{a+b}\left(a+2\,b\right) -\left(2\,a+b\right)}{1-e^{a+b}} \Big].\notag
 \end{align}
We therefore need to show that for all $a,b\in\mathbb{R}$ with $a^2+b^2+a\, b\leq 1$, the following inequalities, corresponding to conditions \eqref{DBE1}, \eqref{DBE3} and \eqref{DBE4} for $i=1,j=3$, hold:
 \begin{align}
 2+a-b\geq 0&\geq 0\,,\label{eq:ineqDac3d1}\\
 \frac{1}{2}\,\sqrt{\left(2+a+2\,b\right) \left( 2-2\,a-b\right) } +1+\frac{- e^{a+b}\left(a+2\,b\right) +\left(2\,a+b\right)}{1+e^{a+b}} &\geq0\,,  \label{eq:ineqDac3d2}\\
 \frac{1}{2}\, \sqrt{ \left(2+a+2\,b\right) \left( 2-2\,a-b\right) } -1+\frac{- e^{a+b}\left(a+2\,b\right) -\left(2\,a+b\right)}{1-e^{a+b}} &\geq0 \qquad  \text{if }\; a+b\neq 0\,. \label{eq:ineqDac3d3}
 \end{align}
Again, explicitly writing out the required inequalities for all $i,j\in\{1,2,3\}$ with $i\neq j$ would simply yield inequalities which can be transformed into \eqref{eq:ineqDac3d1}--\eqref{eq:ineqDac3d3} via the transformations \eqref{substitution}.
Using the further substitution
\[
a=\frac{p}{\sqrt{6}}(\sqrt{3}\cos \theta+\sin \theta)=p\, \sqrt{\frac{2}{3}}\,\cos\left(\theta-\frac{\pi}{6}\right)\,,\qquad
b=\frac{p}{\sqrt{6}}(-\sqrt{3}\cos \theta+\sin \theta)=-p\, \sqrt{\frac{2}{3}}\,\cos\left(\theta+\frac{\pi}{6}\right)\,,\notag
\]
we find
\begin{align}
2(a^2+b^2+ab)=2\,\frac{p^2}{6}(3\,\cos^2 \theta+3\,\sin^2 \theta)=p^2
\end{align}
and
\begin{align}
a+b&=\frac{2\,p}{\sqrt{6}}\sin \theta, \qquad\qquad\qquad\ \ a-b={\sqrt{2}\,p}\,\cos \theta,\notag\\
 a+2\, b&=\frac{p}{\sqrt{6}}(-\sqrt{3}\cos \theta+3\,\sin \theta)=\sqrt{2}\,p\, \sin\left(\theta-\frac{\pi}{6}\right),\\
 2\, a+b&=\frac{p}{\sqrt{6}}(\sqrt{3}\cos \theta+3\,\sin \theta)=\sqrt{2}\,p\, \sin\left(\theta+\frac{\pi}{6}\right).\notag
\end{align}
These substitutions imply that the point $(a,b)$ lies on the ellipse
$
2(a^2+b^2+a\, b)=p^2.
$
Note as well that for all $p\in \mathbb{R}$ the corresponding ellipse is invariant under the transformations \eqref{substitution} and that $(a,b)$ lies inside the full ellipse  $a^2+b^2+a\, b=1$ if and only if $p\leq \sqrt{2}$. In the following we prove that the condition $p\leq \sqrt{2}$  is sufficient for ellipticity.
\begin{figure}[t!]
\begin{minipage}[h]{0.33\linewidth}
\includegraphics[scale=0.5]{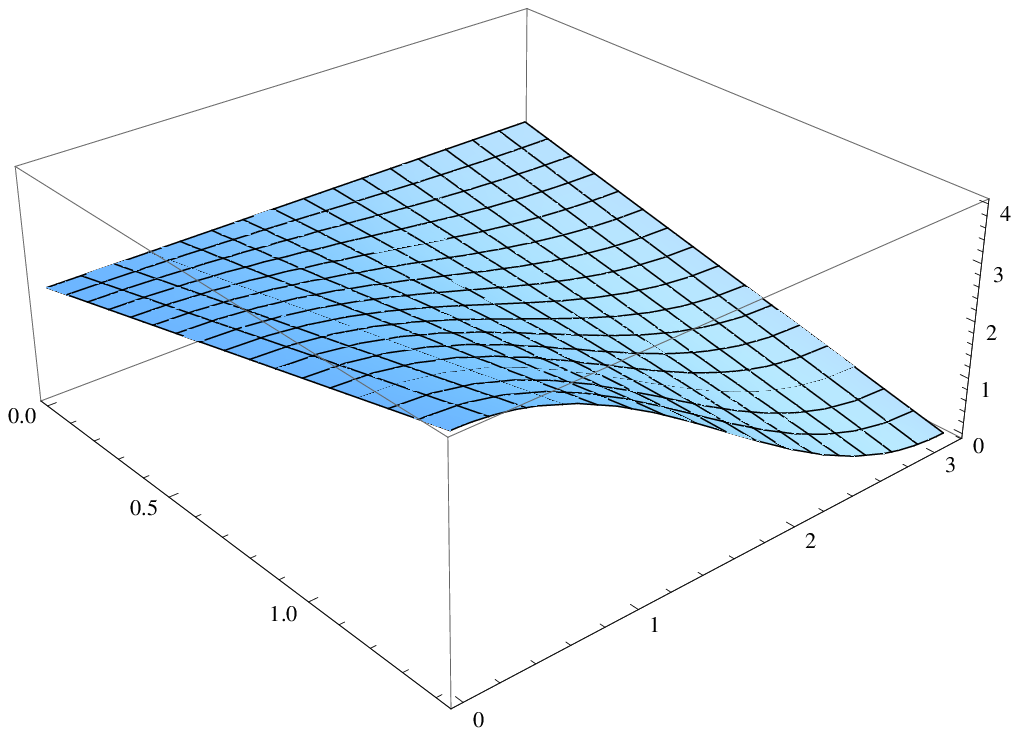}
\end{minipage}
\begin{minipage}[h]{0.33\linewidth}
\includegraphics[scale=0.55]{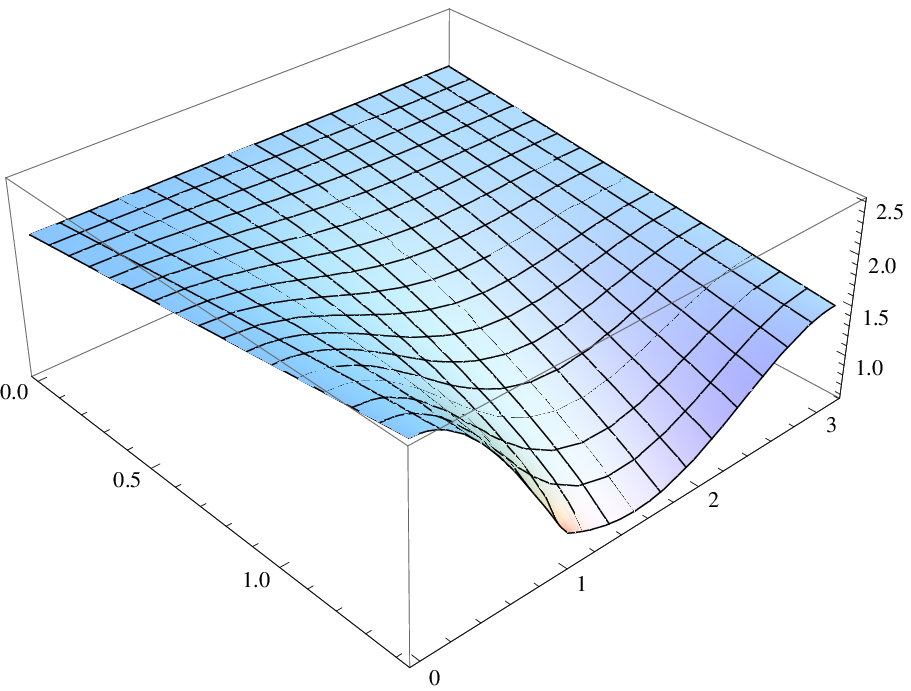}
\end{minipage}
\begin{minipage}[h]{0.33\linewidth}
\includegraphics[scale=0.55]{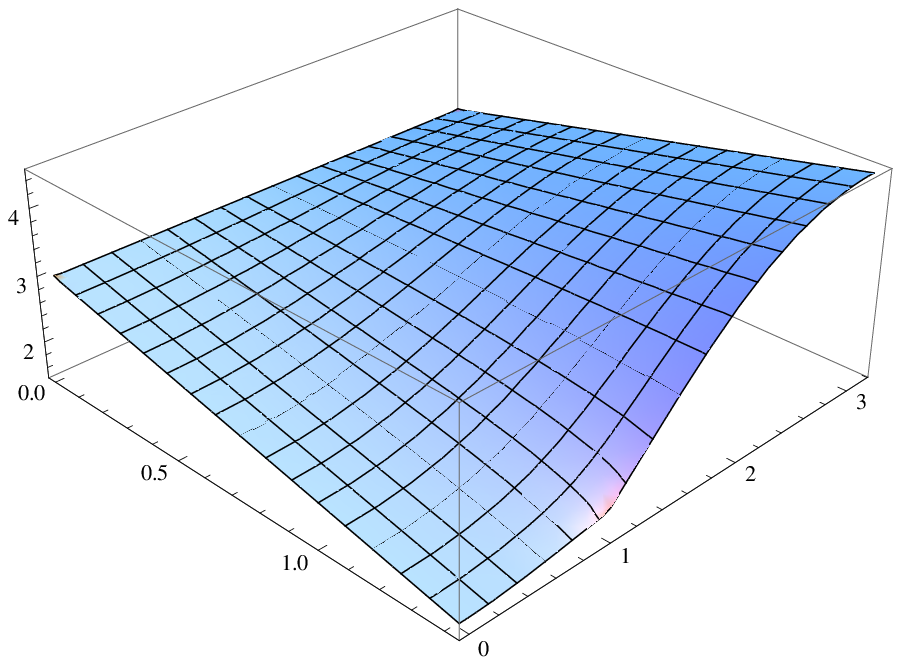}
\end{minipage}
\caption{Graphical representation of the functions $f_1,f_2, f_3$ defined in \eqref{eq:functionsDefinitions}.}
\label{graficfunctii}
\end{figure}%

In terms of $p$ and $\theta$, the required ineqalities \eqref{eq:ineqDac3d1}--\eqref{eq:ineqDac3d3} can now be written as
\begin{align}
\label{eq:functionsDefinitions}
f_1(p,\theta)\colonequals&\,2+{\sqrt{2}\,p}\,\cos \theta\geq0 \qquad\qquad\qquad\qquad\qquad\qquad\qquad\qquad\qquad\ \  \forall \,(p,\theta)\in[0,\sqrt{2}]\times [0,2\,\pi),\notag\\
f_2(p,\theta)\colonequals&\, \frac{1}{2}\,\sqrt{\left[2+\sqrt{2}\,p\, \sin\left(\theta-\frac{\pi}{6}\right)\right]\, \left[2-\sqrt{2}\,p\, \sin\left(\theta+\frac{\pi}{6}\right)\right] } \\&
+1+\frac{ -e^{\frac{2\,p}{\sqrt{6}}\sin \theta}\sqrt{2}\,p\, \sin\left(\theta-\frac{\pi}{6}\right) +\sqrt{2}\,p\, \sin\left(\theta+\frac{\pi}{6}\right)}{1+e^{\frac{2\,p}{\sqrt{6}}\sin \theta}}\geq0\qquad \forall \,(p,\theta)\in[0,\sqrt{2}]\times [0,2\,\pi)\,,\notag
\\
f_3(p,\theta)\colonequals&  \,\frac{1}{2}\,\sqrt{\left[2+\sqrt{2}\,p\, \sin\left(\theta-\frac{\pi}{6}\right)\right]\, \left[2-\sqrt{2}\,p\, \sin\left(\theta+\frac{\pi}{6}\right)\right] } \notag\\&
-1+\frac{ -e^{\frac{2\,p}{\sqrt{6}}\sin \theta}\sqrt{2}\,p\, \sin\left(\theta-\frac{\pi}{6}\right) -\sqrt{2}\,p\, \sin\left(\theta+\frac{\pi}{6}\right)}{1-e^{\frac{2\,p}{\sqrt{6}}\sin \theta}}\geq0 \qquad \forall \,(p,\theta)\in[0,\sqrt{2}]\times \{(0,2\,\pi)\setminus\{\pi\}\}\,.\notag
\end{align}
We observe that the functions $f_1, f_2, f_3$  are periodic in $\theta$ with period $2\, \pi$. Moreover, from
\begin{align}
\sin(2\, \pi-\theta)&=-\sin \theta\,, \qquad \cos(2\, \pi-\theta)=\cos \theta, \notag\\ \sin\Big(2\, \pi-\theta-\frac{\pi}{6}\Big)&=-\sin \Big(\frac{\pi}{6}+\theta\Big)\,, \qquad
\sin\Big(2\, \pi-\theta+\frac{\pi}{6}\Big)=-\sin \Big(\theta -\frac{\pi}{6}\Big)\,,
\end{align}
it follows that
\begin{align}
f_1(p,\theta)=f_1(p,2\,\pi-\theta),\qquad\qquad f_2(p,\theta)=f_2(p,2\,\pi-\theta), \qquad\qquad f_3(p,\theta)=f_3(p,2\,\pi-\theta).
\end{align}
It therefore suffices to show that the functions $f_1, f_2$ and $f_3$ are non-negative in the domains $[0,\sqrt{2}]\times[0,\pi]$, \ $[0,\sqrt{2}]\times[0,\pi]$ and $[0,\sqrt{2}]\times(0,\pi)$, respectively.

These functions are indeed non-negative on these domains, as the graphs in Fig.\ \ref{graficfunctii} clearly show; the reader can find a complete analytical proof in Appendix \ref{section:appendixAnalyticProof}.
This last assertion completes the proof of our proposition.
\end{proof}

 \section{Concluding  remarks}\setcounter{equation}{0}

In order to visualize the established domain of ellipticity for $n=3$, we go back to the initial substitution and find that the unbounded ellipticity domain given by Proposition \ref{crank3} is the set enclosed by the cone presented in Figures \ref{f3} and \ref{f4}, which is completely defined by
\begin{align}
\lambda_1=u\, e^{\frac{p}{\sqrt{6}}\left(\sqrt{3} \cos \theta+\sin \theta\right)}\,,\quad
\lambda_2=u\,,\quad
\lambda_3=u\,  e^{\frac{p}{\sqrt{6}}\left(\sqrt{3} \cos \theta-\sin \theta\right)}\,, \qquad \theta\in[0,2\pi), \;\; u\in[0,\infty), \;\; p\in[0,\sqrt{2}]\,. \notag
\end{align}
\begin{figure}[t!]
\hspace*{1cm}
\begin{minipage}[h]{0.4\linewidth}
\centering
\includegraphics[scale=0.6]{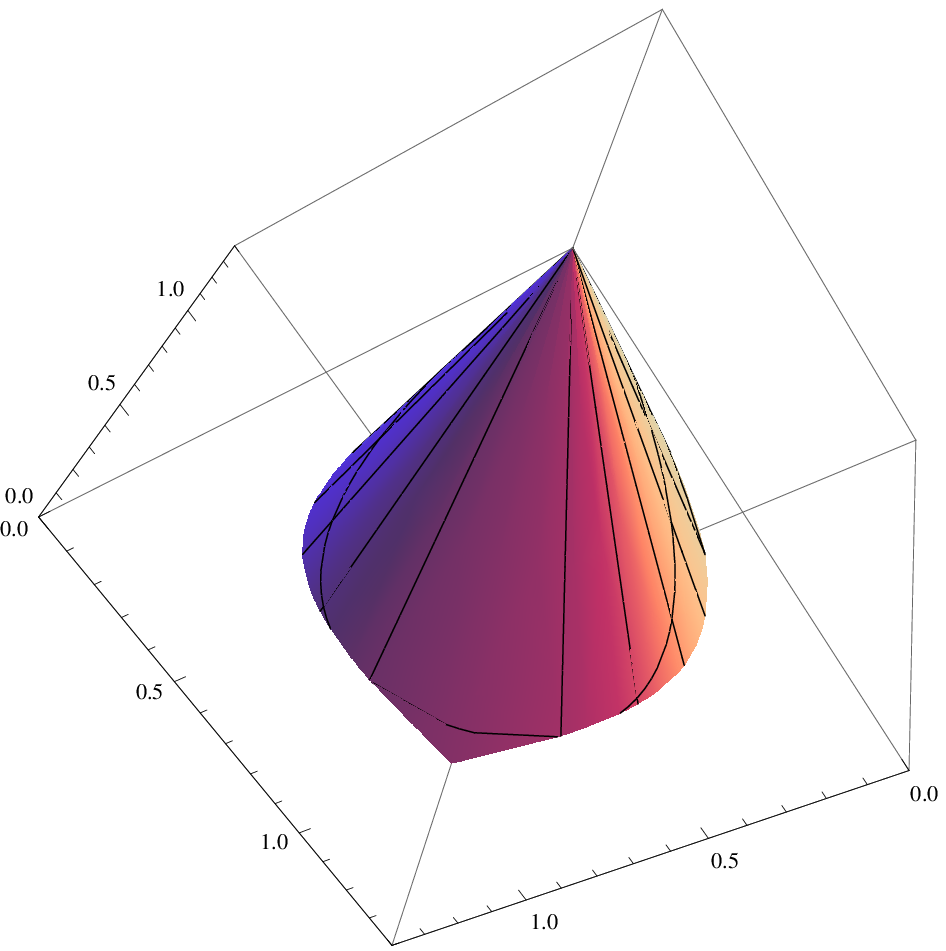}
\centering
\caption{The obtained ellipticity domain of the energy $\|\dev_3\log U\|^2$ in the principal stretches $\lambda_1,\lambda_2,\lambda_3$.}
\label{f3}
\end{minipage}
\hspace*{1cm}
\begin{minipage}[h]{0.4\linewidth}
\centering
\includegraphics[scale=0.6]{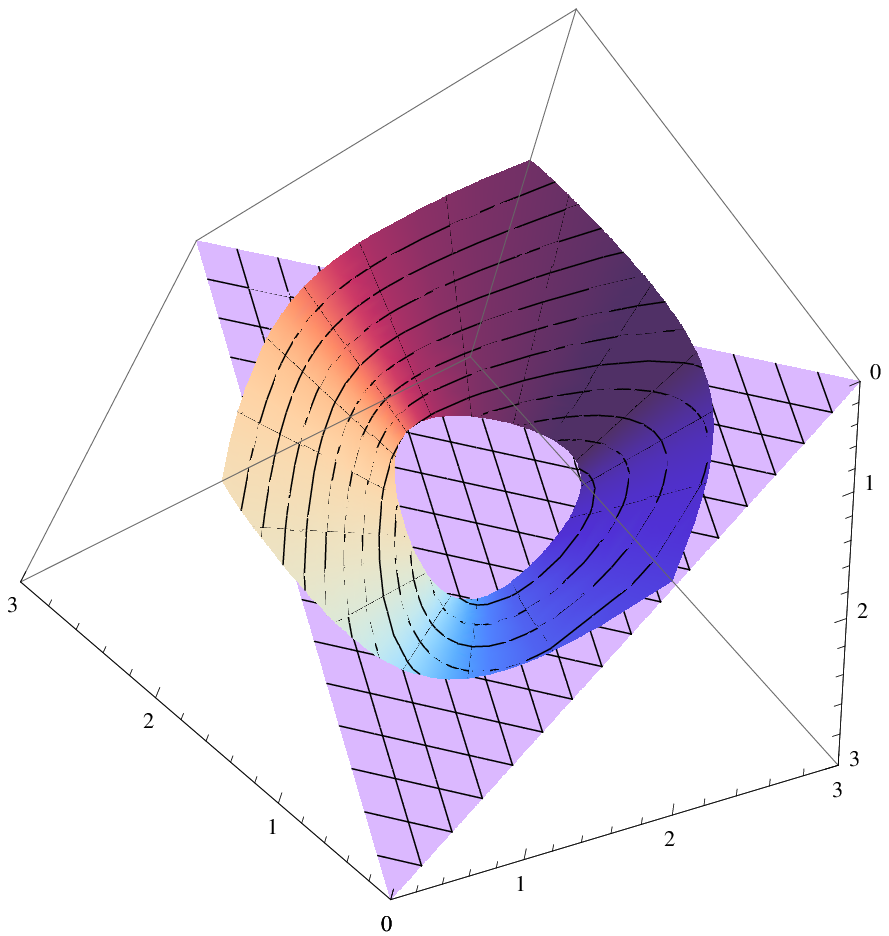}
\put(-74,73){$\bullet$}
\put(-81,73){$\id$}
\caption{The intersection of the ellipticity domain with the plane $\lambda_1+\lambda_2+\lambda_3=3$. }
\label{f4}
\end{minipage}
\end{figure}%
It is clear that the ellipticity of the energy $F\mapsto \mu\,\|\dev_3\log U\|^2$ on ${\rm GL}^+(3)$ for $\mu\geq0$ in the domain
\begin{align}
\mathcal{E}_3\left(W_{_{\rm H}}^{\rm iso}, {\rm LH}, U, \frac{2}{3}\right)=\left\{U\in{\rm PSym}(3)\, |\, \|\dev_3\log U\|^2\leq \frac{2}{3}\right\},
\end{align}
implies the ellipticity in this domain for the exponentiated Hencky energy
\begin{align}\hspace{-2mm}
 W_{_{\rm eH}}\left(F\right)\colonequals\dd{\mu}\,e^{\|{\rm dev}_3\log U\|^2}+\dd\frac{\kappa}{{\text{}}{2\, {\widehat{k}}}}\,e^{\widehat{k}\,[\tr(\log U)]^2}\,, \qquad \mu,\kappa\geq0\,,\quad \widehat{k}\geq \frac{1}{8},
\end{align}
since on the one hand $t\mapsto\mu\, e^{t}$ is convex and monotone increasing, and therefore the composition with this mapping preserves ellipticity, and on the other hand
the function
$
 F\mapsto e^{\widehat{k}\,(\log\det F)^2}
$
is rank-one convex on ${\rm GL}^+(3)$ for  $\widehat{k}\geq \frac{1}{8}$ (see \cite{NeffGhibaLankeit} for more details).

However, numerical tests suggest that the ellipticity domain of the  exponentiated Hencky energy is far bigger than all ellipticity domains which are known for various energies of  quadratic Hencky energy type, see also \cite{NeffGhibaLankeit,NeffGhibaPlasticity}. This  remark  might be useful in the study of large deformations which do not belong to the known ellipticity domains of the energies of the quadratic Hencky energy type.

  Note that the ellipticity domain $\mathcal{E}_3\left(W_{_{\rm H}}^{\rm iso}, {\rm LH}, U, \frac{2}{3}\right)$ conforms exactly to the von-Mises-Huber-Hencky criterion, also known as the \emph{maximum distortion strain energy criterion} in elasto-plasticity. Based on the results of the present paper, it is clear that the quadratic Hencky energy coupled to multiplicative plasticity will never lose LH-ellipticity in elastic unloading. This claim has been detailed in \cite{NeffGhibaPlasticity,NeffGhibaCompIUTAM,NeffGhibaAdd}. We do not know of any other elastic energy in which the ellipticity domain and the elastic domain coincide in this way.

\bibliographystyle{plain} 
\addcontentsline{toc}{section}{References}

\begin{footnotesize}

\end{footnotesize}

\appendix
\section{Appendix}
\addtocontents{toc}{\protect\setcounter{tocdepth}{0}}
\subsection{The positivity of the functions $f_1, f_2, f_3$ in Proposition \ref{crank3}}
\footnotesize
\label{section:appendixAnalyticProof}
It remains to show that the functions $f_1,f_2,f_3$ from the proof of Proposition \ref{crank3} are non-negative on the domains $[0,\sqrt{2}]\times[0,\pi]$, \ $[0,\sqrt{2}]\times[0,\pi]$ and $[0,\sqrt{2}]\times(0,\pi)$, respectively. The condition
$
 f_1(p,\theta)\geq 0
$
simply reads
\begin{align}
 \sqrt{2}\geq -p\, \cos \theta \qquad  \forall\, \theta\in[0, \pi]
\end{align}
and is obviously satisfied for all $p\in[0,\sqrt{2}]$. In order to prove that $f_3(p,\theta)\geq 0$ for all $p\in[0,\sqrt{2}]$ and $\theta\in(0,\pi)$, we will prove more: we will show that the inequality
\begin{align}\label{final1}
-1+\frac{ -e^{\frac{2\,p}{\sqrt{6}}\sin \theta}\sqrt{2}\,p\, \sin\left(\theta-\frac{\pi}{6}\right) -\sqrt{2}\,p\, \sin\left(\theta+\frac{\pi}{6}\right)}{1-e^{\frac{2\,p}{\sqrt{6}}\sin \theta}}\geq0
\end{align}
holds for all $p\in[0,\sqrt{2}]$ and $\theta\in(0,\pi)$. Note that for $(p,\theta)\in [0,\sqrt{2}]\times (0,\pi)$ we have
$e^{\frac{2\,p}{\sqrt{6}}\sin \theta}\geq 1$, since $\sin \theta\geq 0$ for $\theta\in (0,\pi)$. Therefore, instead of proving the inequality \eqref{final1}, it is enough to show that
\begin{align}\label{final2}
1-e^{\frac{2\,p}{\sqrt{6}}\sin \theta}+\sqrt{2}\,p\, e^{\frac{2\,p}{\sqrt{6}}\sin \theta}\, \sin\left(\theta-\frac{\pi}{6}\right) +\sqrt{2}\,p\, \sin\left(\theta+\frac{\pi}{6}\right)\geq0
\end{align}
for all $p\in[0,\sqrt{2}]$ and $\theta\in(0,\pi)$. To this aim, let us remark that for $\theta\in\left[\frac{\pi}{6},\pi\right)$, we have
 \begin{align}\label{final3}
1-e^{\frac{2\,p}{\sqrt{6}}\sin \theta}+&\sqrt{2}\,p\, e^{\frac{2\,p}{\sqrt{6}}\sin \theta}\, \sin\left(\theta-\frac{\pi}{6}\right) +\sqrt{2}\,p\, \sin\left(\theta+\frac{\pi}{6}\right)\notag\\
\geq &1-e^{\frac{2\,p}{\sqrt{6}}\sin \theta}+\sqrt{2}\,p\,  \sin\left(\theta-\frac{\pi}{6}\right) +\sqrt{2}\,p\, \sin\left(\theta+\frac{\pi}{6}\right)=1-e^{\frac{2\,p}{\sqrt{6}}\sin \theta}+\sqrt{6}\,p\, \sin\theta\geq0.
\end{align}
Here we have used that
\begin{align}
\sin\left(\theta+\frac{\pi}{6}\right)+\sin\left(\theta-\frac{\pi}{6}\right)=2\,\cos\frac{\pi}{6}\sin\theta=\sqrt{3}\, \sin \theta
\end{align}
and that the function $\zeta\mapsto 1-e^{\frac{2\,\zeta}{\sqrt{6}}}+\sqrt{6}\,\zeta$ is non-negative on $[0,\sqrt{2}]$. This shows that $f_3(p,\theta)\geq0$ for all $p\in[0,\sqrt{2}]$ and $\theta\in\left[\frac{\pi}{6},\pi\right)$.

Let us now consider the inequality \eqref{final1}  for all $p\in[0,\sqrt{2}]$ and $\theta\in\left(0,\frac{\pi}{6}\right)$. We note that
\begin{align}\label{banal}
\sin\left(\theta-\frac{\pi}{6}\right)&=\frac{\sqrt{3}}{2}\sin \theta -\frac{1}{2}\cos\theta,\qquad\qquad
\sin\left(\theta+\frac{\pi}{6}\right)=\frac{\sqrt{3}}{2}\sin \theta +\frac{1}{2}\cos\theta,
\end{align}
and that  $\theta\in\left(0,\frac{\pi}{6}\right)$, $\cos \theta>0$. We introduce the substitution $\zeta=p\,\sin \theta$, $\eta=p\,\cos\theta$, and our new aim is to prove that
\begin{align}\label{ppro}
r(\zeta,\eta)\colonequals\sqrt{2} \left(\frac{\sqrt{3} \zeta}{2}+\frac{\eta}{2}\right)+\sqrt{2} e^{\frac{2 \zeta}{\sqrt{6}}} \left(\frac{\sqrt{3} \zeta}{2}-\frac{\eta}{2}\right)-e^{\frac{2 \zeta}{\sqrt{6}}}+1\geq 0
\end{align}
for all $\eta\in[0,\sqrt{2}]$ and $\zeta\in [0,\sqrt{2}]$. Again, this last inequality is stronger than necessary, since we are only interested in the case $\zeta^2+\eta^2\leq 2$, i.e.\ in a subdomain of $[0,\sqrt{2}]\times[0,\sqrt{2}]$. Inequality \eqref{ppro} is satisfied since
\begin{align}
\frac{
\partial r }{\partial \zeta}(\zeta,\eta)&=e^{\sqrt{\frac{2}{3}} \zeta} \left(\zeta-\frac{\eta}{\sqrt{3}}+\frac{1}{\sqrt{6}}\right)+\sqrt{\frac{3}{2}}\\
&\geq \min\limits_{\zeta,\eta\in[0,\sqrt{2}]}\left\{e^{\sqrt{\frac{2}{3}} \zeta} \left(\zeta-\frac{\eta}{\sqrt{3}}+\frac{1}{\sqrt{6}}\right)+\sqrt{\frac{3}{2}}\right\}=\sqrt{\frac{2}{3}}>0 \qquad \forall (\zeta,\eta)\in [0,\sqrt{2}]\times [0,\sqrt{2}]\notag
\end{align}
implies that
\begin{align}
r(\zeta,\eta)\geq r(0,\eta)=0 \qquad \forall (\zeta,\eta)\in [0,\sqrt{2}]\times [0,\sqrt{2}].
\end{align}
Thus the function $f_3(p,\theta)$ is also non-negative for all $p\in[0,\sqrt{2}]$ and $\theta\in\left(0,\frac{\pi}{6}\right)$. Combining this with the earlier result for $\theta\in\left[\frac{\pi}{6},\pi\right)$, we find $f_3(p,\theta)\geq0$ for all  $(p,\theta)\in[0,\sqrt{2}]\times(0,\pi)$.

In a similar way we remark that
\begin{align}
1+\frac{ -e^{\frac{2\,p}{\sqrt{6}}\sin \theta}\sqrt{2}\,p\, \sin\left(\theta-\frac{\pi}{6}\right) +\sqrt{2}\,p\, \sin\left(\theta+\frac{\pi}{6}\right)}{1+e^{\frac{2\,p}{\sqrt{6}}\sin \theta}}\geq0\notag
\end{align}
is equivalent to
\begin{align}
1+\sqrt{2}\,p\, \sin\left(\theta+\frac{\pi}{6}\right)+e^{\frac{2\,p}{\sqrt{6}}\sin \theta}
\left[1-\sqrt{2}\,p\, \sin\left(\theta-\frac{\pi}{6}\right)\right] \geq0\notag
\end{align}for all  $\theta\in[0,\pi]$. Using that
\begin{align}
\sin\left(\theta+\frac{\pi}{6}\right)-\sin\left(\theta-\frac{\pi}{6}\right)=2\,\sin\frac{\pi}{6}\cos\theta,
\end{align}
we find
\begin{align}
\sin\left(\theta+\frac{\pi}{6}\right)\geq \sin\left(\theta-\frac{\pi}{6}\right)\qquad \forall \, \theta\in\left[0,\frac{\pi}{2}\right]\,.
\end{align}
Thus, for all $\theta\in\left[0,\frac{\pi}{2}\right]$ and all $p\in[0,\sqrt{2}]$, we have
\begin{align}
1+\sqrt{2}\,p\, \sin\left(\theta+\frac{\pi}{6}\right)&+e^{\frac{2\,p}{\sqrt{6}}\sin \theta}
\left[1-\sqrt{2}\,p\, \sin\left(\theta-\frac{\pi}{6}\right)\right] \geq 1+e^{\frac{2\,p}{\sqrt{6}}\sin \theta}+ \sqrt{2}\,p\, \sin\left(\theta-\frac{\pi}{6}\right)
\left[1-e^{\frac{2\,p}{\sqrt{6}}\sin \theta}\right]\notag\\
&= 1+e^{\frac{2\,p}{\sqrt{6}}\sin \theta}+ \frac{\sqrt{6}}{2}\,p\, \sin\theta
\left[1-e^{\frac{2\,p}{\sqrt{6}}\sin \theta}\right]-\frac{\sqrt{2}}{2}\,p\, \cos\theta
\left[1-e^{\frac{2\,p}{\sqrt{6}}\sin \theta}\right].\notag
\\
&\geq 1+e^{\frac{2\,p}{\sqrt{6}}\sin \theta}+ \frac{\sqrt{6}}{2}\,p\, \sin\theta
\left[1-e^{\frac{2\,p}{\sqrt{6}}\sin \theta}\right]\geq 0,\notag
\end{align}
since $\cos \theta\geq 0$ and $1-e^{\frac{2\,p}{\sqrt{6}}\sin \theta}\leq 0$ on $\left[0,\frac{\pi}{2}\right]$ and since  the function  $\zeta\mapsto 1+e^{\frac{2\,\zeta}{\sqrt{6}}}+ \frac{\sqrt{6}}{2}\,\zeta
\left[1-e^{\frac{2\,\zeta}{\sqrt{6}}}\right]$ is positive on $[0,\sqrt{2}]$, where the substitution $\zeta=p\, \sin \theta$ was considered. The above inequality shows that  $f_2(p,\theta)\geq 0$ for all $p\in[0,\sqrt{2}]$ and $\theta\in\left[0,\frac{\pi}{2}\right]$.

 Next we prove that $f_2(p,\theta)\geq0$ for $\theta\in \left(\frac{\pi}{2},\pi\right]$. We find
\begin{align}
\left(1+e^{\frac{2\,p}{\sqrt{6}}\sin \theta}\right)f_2(p,\theta)\geq & \frac{1}{2}\,\left(1+e^{\frac{2\,p}{\sqrt{6}}\sin \theta}\right)\sqrt{\left[2+\sqrt{2}\,p\, \sin\left(\theta-\frac{\pi}{6}\right)\right]\, \left[2-\sqrt{2}\,p\, \sin\left(\theta-\frac{\pi}{6}\right)\right] } \notag\\&
+\left(1+e^{\frac{2\,p}{\sqrt{6}}\sin \theta}\right) -e^{\frac{2\,p}{\sqrt{6}}\sin \theta}\sqrt{2}\,p\, \sin\left(\theta-\frac{\pi}{6}\right) +\sqrt{2}\,p\, \sin\left(\theta+\frac{\pi}{6}\right),\notag\\
= & \frac{1}{2}\,\left(1+e^{\frac{2\,p}{\sqrt{6}}\sin \theta}\right)\sqrt{4-{2}\,p^2\, \sin^2\left(\theta-\frac{\pi}{6}\right)}
+\left(1+e^{\frac{2\,p}{\sqrt{6}}\sin \theta}\right) \\
&\notag-e^{\frac{2\,p}{\sqrt{6}}\sin \theta}\sqrt{2}\,p\, \sin\left(\theta-\frac{\pi}{6}\right) +\sqrt{2}\,p\, \sin\left(\theta+\frac{\pi}{6}\right)\notag
\end{align}
for all  $p\in[0,\sqrt{2}]$ and $\theta\in\left(\frac{\pi}{2},\pi\right]$, since
\begin{align}
\sin\left(\theta+\frac{\pi}{6}\right)\leq \sin\left(\theta-\frac{\pi}{6}\right)\qquad \forall \, \theta\in\left(\frac{\pi}{2},\pi\right].
\end{align}
Combining \eqref{banal} with the fact that $\cos \theta<0$ for all $\theta\in\left(\frac{\pi}{2},\pi\right]$,
we introduce a new substitution $\zeta=p\,\sin \theta$,\; $\varpi=-p\,\cos\theta$,
and our new goal is to prove that
\begin{align}
h(\zeta,\varpi)\colonequals&\frac{1}{2}\,\left(1+e^{\frac{2\,\zeta}{\sqrt{6}}}\right)\sqrt{4-{2}\, \left(\frac{\sqrt{3}}{2}\zeta -\frac{1}{2}\,\varpi\right)^2}
+\left(1+e^{\frac{2\,\zeta}{\sqrt{6}}}\right) \notag\\&\dd -e^{\frac{2\,\zeta}{\sqrt{6}}}\sqrt{2}\, \left(\frac{\sqrt{3}}{2}\zeta +\frac{1}{2}\,\varpi\right) +\sqrt{2}\, \left(\frac{\sqrt{3}}{2}\zeta -\frac{1}{2}\,\varpi\right)=\frac{1}{2}\, \left(e^{\frac{2 \zeta}{\sqrt{6}}}+1\right)\,s(\zeta,\varpi) \geq 0
\end{align}
for all $\zeta,\varpi\in [0,\sqrt{2}]$, where
\begin{align}
 s(\zeta,\varpi)\colonequals  \sqrt{-\frac{3 \zeta^2}{2}+\sqrt{3} \zeta  \varpi-\frac{\varpi^2}{2}+4}-\sqrt{2} \varpi+2- \sqrt{6}\,  \zeta\, \frac{e^{\frac{2 \zeta}{\sqrt{6}}}-1}{e^{\frac{2 \,\zeta}{\sqrt{6}}}+1}\,.
\end{align}

Again, this is more than is needed, since for the non-negativity of $f_2$ for $p\in[0,\sqrt{2}]$ and $\theta\in\left(\frac{\pi}{2},\pi\right]$ it is enough to prove that $h(\zeta,\varpi)\geq 0$ only for all $\zeta,\varpi\in [0,\sqrt{2}]$ which belong to the smaller domain $\zeta^2+\varpi^2<2$. We observe that
\begin{align}
\frac{\partial s}{\partial \varpi} (\zeta,\varpi)&=\frac{1}{\sqrt{2}}\,\left(\frac{\sqrt{3} \zeta-\varpi}{\sqrt{-3 \zeta^2+2 \sqrt{3} \zeta \,\varpi-\varpi^2+8}}-2\right)
\leq \max_{\zeta,\varpi\in [0,\sqrt{2}]}\left\{\frac{1}{\sqrt{2}}\,\left(\frac{\sqrt{3} \zeta-\varpi}{\sqrt{-3 \zeta^2+2 \sqrt{3} \zeta \,\varpi-\varpi^2+8}}-2\right)\right\}\notag\\
&=\frac{\sqrt{3}-2}{\sqrt{2}}<0 \qquad \forall \, \zeta,\varpi\in [0,\sqrt{2}].
\end{align}
Hence, we deduce
\begin{align}
h(\zeta,\varpi)&\geq h(\zeta,\sqrt{2})=\sqrt{-\frac{3 \zeta^2}{2}+\sqrt{6} \zeta+3}-\frac{\sqrt{6} \left(e^{\sqrt{\frac{2}{3}} \zeta}-1\right) \zeta}{e^{\sqrt{\frac{2}{3}} \zeta}+1}\notag\\
&\geq \max_{\zeta\in [0,\sqrt{2}]}h(\zeta,\sqrt{2})=\sqrt[4]{3} \left(\sqrt{2}-2 \sqrt[4]{3} \tanh \left(\frac{1}{\sqrt{3}}\right)\right)\approx 0.0573242>0 \qquad \forall \, \zeta,\varpi\in [0,\sqrt{2}],
\end{align}
which means that $h$ is non-negative on $[0,\sqrt{2}]\times [0,\sqrt{2}]$. This last conclusion shows that $f_2(p,\theta)\geq0$ for all $(p,\theta)\in[0,\sqrt{2}]\times \left(\frac{\pi}{2},\pi\right]$. Therefore, the function  $f_2$ is non-negative on $[0,\sqrt{2}]\times \left[0,\pi\right]$.

\end{document}